\newtheorem{thm}{Theorem}[section]
\newtheorem{cor}[thm]{Corollary}
\newtheorem{lem}[thm]{Lemma}
\newtheorem{prop}[thm]{Proposition}
\newtheorem{exam}[thm]{Example}
\numberwithin{equation}{section}
\begin{document}

\title{Generalized Hirano Inverses in Banach algebras}

\author{Huanyin Chen}
\author{Marjan Sheibani$^*$}
\address{
Department of Mathematics\\ Hangzhou Normal University\\ Hang -zhou, China}
\email{<huanyinchen@aliyun.com>}
\address{Women's University of Semnan (Farzanegan), Semnan, Iran}
\email{<sheibani@fgusem.ac.ir>}

 \thanks{$^*$Corresponding author}

\subjclass[2010]{15A09, 32A65, 16E50.} \keywords{Hirano inverse; Cline's formula; additive property; operator matrix.}

\begin{abstract}
Let $A$ be a Banach algebra. An element $a\in A$ has generalized Hirano inverse if there exists $b\in A$ such that $$b=bab, ab=ba, a^2-ab\in A^{qnil}.$$ We prove that $a\in A$ has generalized Hirano inverse if and only if $a$ has g-Drazin inverse and $a-a^3\in A^{qnil}$, if and only if there exists $p^3=p\in comm(a)$ such that $a-p\in A^{qnil}$. The Cline's formula for generalized Hirano inverses are thereby obtained. Let $a,b\in A$ have generalized Hirano inverse. If $a^2b=aba$ and $b^2a=bab$, we prove that $a+b$ has generalized Hirano inverse if and only if $1+a^db$ has generalized Hirano inverse.
Hirano inverses of operator matrices over Banach spaces are also studied.\end{abstract}

\maketitle

\section{Introduction}

Let $A$ be a Banach algebra with an identity. The commutant of $a\in A$ is defined by $comm(a)=\{x\in
R~|~xa=ax\}$. The double commutant of $a\in A$ is defined by $comm^2(a)=\{x\in A~|~xy=yx~\mbox{for all}~y\in comm(a)\}$. An element $a\in A$ has g-Drazin inverse (i.e., generalized Drazin inverse) in case there exists $b\in A$ such that $$b=bab, b\in comm(a), a-a^2b\in A^{qnil}.$$ The preceding $b$ is unique if exists, we denote it by $a^d$. Here, $A^{qnil}$ denote the set of all quasinilpotents of the ring $R$, i.e., $$A^{qnil}=\{ a\in A~|~1+ax\in A~\mbox{invertible for all}~x\in comm(a)\}.$$

The motivation of this paper is to extend such generalized inverses in Banach algebras to a wider case by means of tripotents $p$, i.e, $p^3=p$. An element $a\in A$ has generalized Hirano inverse if there exists $b\in A$ such that $$b=bab, b\in comm(a), a^2-ab\in A^{qnil}.$$ We may replace the double commutator for the commutator in the preceding definition for a Banach algebra (see \cite{Mo}). Many elementary propertes of Hirano inverses were investiagted in [CS].

As is well known, $a\in A$ has g-Drazin inverse if and only if there exists an idempotent $e\in comm(a)$ such that $a+e\in A$ is invertible and $ae\in A^{qnil}$. Here, the spectral idempotent $e$ is unique, and denote it by $a^{\pi}$. In Section 2, we prove that $a\in A$ has generalized Hirano inverse if and only if $a$ has g-Drazin inverse and $a-a^3\in A^{qnil}$, if and only if there exists $p^3=p\in comm(a)$ such that $a-p\in A^{qnil}$.

Let $a,b\in R$. Then $ab$ has g-Drazin inverse if and only if $ba$ has g-Drazin inverse and
$(ba)^{d} = b((ab)^{d})^2a$. This was known as Cline's formula for g-Drazin inverses (see \cite{L}).
In Section 3, we extend Cline's formula for generalized Hirano inverses.

In Section 4, we are concern on additive property for generalized Hirano inverses. Let $a,b\in A$ have generalized Hirano inverses. If $a^2b=aba$ and $b^2a=bab$, we prove that $a+b$ has generalized Hirano inverse if and only if $1+a^db$ has generalized Hirano inverse.

Finally, in the last section, we investigate generalized Hirano inverses for operator matrices over Banach spaces.

Throughout the paper, all Banach algebra are complex with identity $1$. We use $U(A)$ to denote the set of all units in $A$. ${\Bbb N}$ stands for the set of all natural numbers.

\section{generalized Hirano inverses}

The aim of this section is to present new characterizations of generalized Hirano inverses which will be used repeatedly. We begin with

\begin{lem} \cite[Lemma 2.10 and Lemma 2.11]{ZMC} Let $A$ be a Banach algebra, $a,b\in A$ and $a^2b=aba, b^2a=bab$.\end{lem}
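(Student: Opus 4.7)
Since the displayed statement contains only the hypotheses (the conclusion of the combined Lemmas~2.10 and~2.11 of \cite{ZMC} is omitted in this excerpt), I read the intended content, based on its role in Section~4 on additive properties of generalized Hirano inverses, as combining two things: (i) the algebraic identities $a^{n}b = ab\,a^{n-1}$ and $b^{n}a = ba\,b^{n-1}$ for every $n\geq 1$, together with $(ab)^{n}=a^{n}b^{n}$ and $(ba)^{n}=b^{n}a^{n}$; and (ii) the closure statement that if additionally $a,b\in A^{qnil}$ then $ab$, $ba$, and $a+b$ all lie in $A^{qnil}$.

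For (i), the plan is to rewrite $a^{2}b = aba$ as $a(ab) = (ab)a$, i.e.\ $ab\in\mathrm{comm}(a)$. A one-line induction on $n$ then gives $a^{n}(ab)=(ab)a^{n}$, which rearranges to $a^{n+1}b = ab\,a^{n}$. The mirror argument applied to $b^{2}a=bab$ yields $b^{n+1}a = ba\,b^{n}$. Using $ab\in\mathrm{comm}(a)$ and a short further induction, $(ab)^{n+1}=(ab)(a^{n}b^{n}) = a^{n}(ab)b^{n} = a^{n+1}b^{n+1}$, and symmetrically $(ba)^{n}=b^{n}a^{n}$.

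For (ii), the product halves follow directly from (i): submultiplicativity gives $\|(ab)^{n}\|^{1/n} = \|a^{n}b^{n}\|^{1/n}\leq \|a^{n}\|^{1/n}\|b^{n}\|^{1/n}\to r(a)r(b)=0$, so $ab\in A^{qnil}$, and likewise $ba\in A^{qnil}$. For the sum, my plan is to expand $(a+b)^{N}$ and use the identities from (i) iteratively to collect each monomial into a product of the form $a^{i}b^{j}$ (tracking the combinatorial multiplicities), then estimate $\|(a+b)^{N}\|^{1/N}$ using $r(a)=r(b)=0$ and the norm bound $\|a^{i}b^{j}\|\leq \|a^{i}\|\,\|b^{j}\|$. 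An alternative route is a $2\times 2$ block-matrix trick that realizes $a+b$ as a companion-type combination and reads off its spectrum from those of $a$ and $b$.

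The main obstacle is the sum part of (ii): the hypotheses do not force $ab=ba$, so normal-ordering an arbitrary noncommutative monomial in $a$ and $b$ into $a^{i}b^{j}$ is delicate, since each application of an identity in (i) only permits moving a single $b$ past a run of $a$'s in a restricted way. The combinatorial bookkeeping must be tight enough that the resulting sum admits a geometric-type estimate; once this is done, the quasinilpotency of $a+b$ drops out of $r(a)=r(b)=0$.
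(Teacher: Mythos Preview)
The paper does not supply a proof of this lemma; it is quoted from \cite{ZMC} (Lemmas~2.10 and~2.11 there), so there is no in-paper argument to compare against. The two conclusions you were trying to reconstruct are printed immediately after the lemma environment: (1) if $a,b\in A^{qnil}$ then $a+b\in A^{qnil}$; (2) if $a$ \emph{or} $b$ lies in $A^{qnil}$ then $ab\in A^{qnil}$. Your guess is essentially right, except that (2) requires only one factor to be quasinilpotent; your spectral-radius argument via $(ab)^n=a^nb^n$ already yields this stronger form, since $\|a^n\|^{1/n}\|b^n\|^{1/n}\to r(a)r(b)=0$ as soon as one of $r(a),r(b)$ vanishes. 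The identities you list under~(i) are correct and are the natural tools, but they are not themselves part of the statement.

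Your plan for the sum has one small inaccuracy that is easy to repair. Not every length-$n$ monomial in $a,b$ collapses to $a^ib^j$: only those beginning with $a$ do. Indeed, for $i\ge 1$ one has $a^ib\cdot a=a^{i-1}(aba)=a^{i+1}b$, and then, using $b^{j+1}a=bab^{j}$ (from $ba\in\mathrm{comm}(b)$), an induction on $j$ gives $a^ib^j\cdot a=a^{i+1}b^j$; appending $b$ is trivial, so a word-length induction finishes the claim. Monomials beginning with $b$ collapse instead to $b^ja^i$, by the symmetric argument. This does not harm your estimate, since $\|b^ja^i\|\le\|b^j\|\,\|a^i\|$ as well, and the word count still gives $\|(a+b)^n\|\le\sum_{i=0}^n\binom{n}{i}\|a^i\|\,\|b^{n-i}\|$; a standard splitting at $i=\lfloor n/2\rfloor$ then forces $r(a+b)=0$. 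So your outline is sound once this two-sided normal form is acknowledged.
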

\begin{enumerate}
\item [(1)]{\it If $a,b\in A^{qnil}$, then $a+b\in A^{qnil}$.}
\vspace{-.5mm}
\item [(2)]{\it If $a$ or $b\in A^{qnil}$, then $ab\in A^{qnil}$.}
\end{enumerate}

\begin{lem} Let $A$ be a Banach algebra, and let $a\in A$. Then $a$ has gs-Drazin inverse if and only if\end{lem}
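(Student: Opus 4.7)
The plan is to prove, by analogy with the characterization of generalized Hirano inverses just established, that $a$ has gs-Drazin inverse if and only if $a$ has g-Drazin inverse with $a-a^2\in A^{qnil}$, if and only if there exists an idempotent $p\in comm(a)$ with $a-p\in A^{qnil}$. I will carry out the three implications cyclically: (i) $\Rightarrow$ (ii) $\Rightarrow$ (iii) $\Rightarrow$ (i).

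First, suppose $b$ is a gs-Drazin inverse of $a$, so $b=bab$, $b\in comm(a)$, and $a-ab\in A^{qnil}$. The key observation is that $e:=ab=ba$ is an idempotent, because $e^2=abab=a(bab)=ab=e$. Then $a-a^2b=a-ae=(a-e)(1-e)$, a commuting product in which $a-e=a-ab\in A^{qnil}$, so $a-a^2b\in A^{qnil}$; this shows the same $b$ witnesses that $a$ has g-Drazin inverse. For the extra condition, I will write $a-a^2=(a-a^2b)-(a^2-a^2b)$, where $a^2-a^2b=a(a-ab)$ is a commuting product with a quasinilpotent factor, hence quasinilpotent. Both summands lie in the commutative subalgebra generated by $a$ and $b$, so Lemma 2.1(1) gives $a-a^2\in A^{qnil}$.

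Next, suppose $a$ has g-Drazin inverse $a^d$ and $a-a^2\in A^{qnil}$, and set $p:=aa^d\in comm^2(a)$. Then $p$ is an idempotent, and $a(1-p)=aa^\pi\in A^{qnil}$ from standard g-Drazin theory. To handle the $pAp$-part, I will use the invertible element $u:=ap+(1-p)\in U(A)$, whose inverse is $a^d+(1-p)$. Since $a^d\in comm^2(a)$, both $u$ and $p-ap$ commute, and $u(p-ap)=ap(p-ap)=p(a-a^2)\in A^{qnil}$ because $p$ commutes with the quasinilpotent $a-a^2$. Dividing by $u$ yields $p-ap\in A^{qnil}$. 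Finally, $a-p=(ap-p)+a(1-p)$ is a sum of two orthogonal (hence commuting) quasinilpotents, so $a-p\in A^{qnil}$ by Lemma 2.1(1).

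For the last implication, assume $p^2=p\in comm(a)$ and $a-p\in A^{qnil}$. Using the Peirce decomposition $a=ap+a(1-p)=:a_1+a_2$ with $a_1\in pAp$ and $a_2\in (1-p)A(1-p)$, multiplication of the commuting idempotents $p$ and $1-p$ against $a-p$ shows $a_1-p=(a-p)p$ and $a_2=(a-p)(1-p)$ are each quasinilpotent (commuting product with a quasinilpotent factor). Hence $a_1=p+(a_1-p)$ is invertible in the unital subalgebra $pAp$ via the Neumann series $\sum_{n\ge 0}(-1)^n(a_1-p)^n$, which converges absolutely because $a_1-p$ has spectral radius zero; call this inverse $b\in pAp$. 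Then $ab=ba=p$ (the $a_2$-terms are killed by orthogonality $p(1-p)=0$), $bab=bp=b$, and $a-ab=a-p\in A^{qnil}$, so $b$ witnesses the gs-Drazin property. The main technical nuisance is the middle step, where I must justify the ``corner inverse'' trick $u=ap+(1-p)$: verifying $u\in U(A)$ with the explicit inverse $a^d+(1-p)$, and confirming that $u$ commutes with $p-ap$ using $a^d\in comm^2(a)$. Once that algebra is in place, the quasinilpotent arithmetic (Lemma 2.1(1) plus the fact that commuting multiplication preserves $A^{qnil}$) collapses the rest.
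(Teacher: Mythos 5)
Your proof is correct, and it reaches the same pivotal object as the paper --- the idempotent $p=aa^d=1-a^{\pi}$ and the reduction of the whole lemma to ``$a$ minus a commuting idempotent is quasinilpotent'' --- but it gets there by a genuinely more self-contained route. The paper's forward direction starts from the decomposition $a=e+w$ with $e^2=e\in comm^2(a)$ and $w\in A^{qnil}$, i.e.\ it silently invokes the known characterization of gs-Drazin invertibility as ``commuting idempotent plus quasinilpotent'' (Gurgun), and then gets $a-a^2=(1-2e-w)w$ in one line; its backward direction uses the slick identity $a-(1-e)=-v^{-1}(a-a^2)$ with $v=a-e$ invertible. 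You instead work directly from the definition $b=bab$, $b\in comm(a)$, $a-ab\in A^{qnil}$: in the forward direction you set $e=ab$ and split $a-a^2=(a-a^2b)-(a^2-a^2b)$; in the backward direction you verify $u=ap+(1-p)\in U(A)$ with inverse $a^d+(1-p)$, extract $p-ap\in A^{qnil}$ from $u(p-ap)=(a-a^2)p$, and then assemble $a-p$ from two orthogonal quasinilpotent corners. What your version buys is that the final step actually constructs the gs-Drazin inverse as a Neumann series in the corner algebra $pAp$, so you never need to cite the external equivalence that the paper's proof leans on; the cost is a longer, three-step cyclic argument where the paper needs only a two-line computation in each direction. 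All the quasinilpotent arithmetic you use (commuting sums and products via Lemma 2.1) is sound, and the claim $a^d\in comm^2(a)$ that you need for the commutation of $u$ with $p-ap$ is standard Koliha theory.
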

\begin{enumerate}
\item [(1)]{\it $a$ has g-Drazin inverse;}
\vspace{-.5mm}
\item [(2)]{\it $a-a^2\in A^{qnil}$.}
\end{enumerate}\begin{proof} $\Longrightarrow$ Write $a=e+w$ with $e^2=e\in comm^2(a),w\in A^{qnil}$. Then $a+(1-e)=1+w\in U(R)$ and $a(1-e)=(1-e)w\in A^{qnil}$.
Therefore $a$ has g-Drazin inverse. Moreover, $a-a^2=(1-2e-w)w\in A^{qnil}$, by Lemma 2.1.

$\Longleftarrow$ Since $a$ has g-Drazin inverse, we can find som $e^2=e\in comm(a)$ such that $u:=a+e\in U(A)$ and $ae\in A^{qnil}$.
Then $v:=a-e=a(1-e)+(ae-e)=u(1-e)+(ae-1)e\in U(R)$. Hence, $a-a^2=(e+v)-(e+v)^2=-v(2e+v-1)$. This shows that $a-(1-e)=-v^{-1}(a-a^2)$. In light of Lemma 2.1, $a-(1-e)\in A^{qnil}$. This completes the proof.\end{proof}

\begin{lem} Let $A$ be a Banach algebra, and let $a\in A$. Then the following are equivalent:\end{lem}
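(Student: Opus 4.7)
The plan is to cycle $(1)\Rightarrow(2)\Rightarrow(3)\Rightarrow(1)$ among the likely three conditions: $(1)$ $a$ has a generalized Hirano inverse; $(2)$ $a$ has g-Drazin inverse and $a-a^3\in A^{qnil}$; $(3)$ there exists $p\in comm(a)$ with $p^3=p$ and $a-p\in A^{qnil}$. The structure mirrors Lemma~2.2, with tripotents replacing idempotents and the relation $a-a^3$ replacing $a-a^2$.

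For $(1)\Rightarrow(2)$ I would first observe that $e:=ab$ is idempotent, since $(ab)^2=a(bab)=ab$. This yields a commuting orthogonal decomposition $a=u+w$ with $u=a^2b$, $w=(1-e)a$, and $uw=wu=0$. From $w^2=(1-e)(a^2-e)$ together with $a^2-e=a^2-ab\in A^{qnil}$ commuting with $1-e$, we get $w\in A^{qnil}$, hence $b=a^d$. Symmetrically $u^2-e=e(a^2-e)\in A^{qnil}$, so $u-u^3=-u(u^2-e)\in A^{qnil}$, while $w-w^3=w(1-w^2)\in A^{qnil}$ is immediate. Orthogonality forces $a^3=u^3+w^3$, so $a-a^3=(u-u^3)+(w-w^3)$ is a sum of two commuting quasinilpotents with vanishing product, in $A^{qnil}$ by Lemma~2.1.

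The remaining two closing implications are easier. For $(2)\Rightarrow(1)$ take $b=a^d$: the identities $b=bab$ and $ab=ba$ are standard g-Drazin properties, and $a^2-aa^d$ splits through the spectral idempotent $a^\pi$ as $(aa^\pi)^2$ on the $a^\pi$-corner and $-a^d(a-a^3)$ on the complementary corner --- each a commuting product with a quasinilpotent --- combined by Lemma~2.1. For $(3)\Rightarrow(2)$, write $a=p+w$ with commuting $p,w$, $p^3=p$, $w\in A^{qnil}$; the idempotent $e:=1-p^2\in comm(a)$ satisfies $ea=ew\in A^{qnil}$, and $a+e$ is invertible via corner-by-corner Neumann series (on $eAe$ it acts as $e(1+w)$, on $p^2Ap^2$ as $p(1+pw)$, where $p$ is self-inverse in the corner), so $a$ has g-Drazin inverse. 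Expanding $(p+w)^3=p+3p^2w+3pw^2+w^3$ gives $a-a^3=-w(3p^2+3pw+w^2)$, a commuting product involving $w\in A^{qnil}$.

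The main obstacle is $(2)\Rightarrow(3)$, where the tripotent must actually be constructed. Since $a-a^3\in A^{qnil}$ forces $\sigma(a)\subseteq\{-1,0,1\}$ by spectral mapping, I would apply holomorphic functional calculus to the locally constant function $g$ on a neighborhood of $\sigma(a)$ taking the values $-1,0,+1$ on disjoint small disks around these points: $p:=g(a)$ satisfies $p^3=g^3(a)=g(a)=p$, lies in $comm^2(a)\subseteq comm(a)$, and $a-p=h(a)$ with $h(s)=s-g(s)$ vanishing on $\sigma(a)$, hence $\sigma(a-p)=\{0\}$ and $a-p\in A^{qnil}$. A more explicit variant works inside the corner $(1-a^\pi)A(1-a^\pi)$: the element $a_1:=(1-a^\pi)a$ satisfies $a_1^2-(1-a^\pi)=-a^d(a-a^3)\in A^{qnil}$, so $p:=a_1(a_1^2)^{-1/2}$ (with the square root defined by the norm-convergent binomial series in the quasinilpotent $a_1^2-(1-a^\pi)$) is an involution in the corner, hence a tripotent in $A$. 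The subtlety is justifying the functional calculus (or series convergence) for quasinilpotent inputs and tracking commutation with $a$.
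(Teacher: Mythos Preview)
You have misidentified the three conditions. The lemma in question is Lemma~2.3 of the paper, whose equivalent statements are: (1) $a$ has generalized Hirano inverse; (2) $a^2$ has gs-Drazin inverse; (3) there exists $b\in comm(a)$ with $b=(ab)^2$ and $a^2-a^2b\in A^{qnil}$. The paper's proof is short and purely algebraic: for $(1)\Rightarrow(3)$ set $b=c^2$ where $c$ is the generalized Hirano inverse; for $(3)\Rightarrow(2)$ observe that the same $b$ witnesses the gs-Drazin inverse of $a^2$; for $(2)\Rightarrow(1)$ take $c\in comm^2(a^2)$ with $c=c^2a^2$ and $a^2-a^2c\in A^{qnil}$, and put $b=ac$. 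No tripotent construction or functional calculus enters.

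What you actually proved is (essentially) the combination of Theorem~2.4 (equivalence with ``g-Drazin inverse and $a-a^3\in A^{qnil}$'') and Theorem~2.7 (equivalence with ``$a-p\in A^{qnil}$ for some tripotent $p\in comm(a)$''), which appear later and rely on Lemma~2.3. Your arguments for those are largely sound; in particular your $(2)\Rightarrow(3)$ via holomorphic functional calculus on $\sigma(a)\subseteq\{-1,0,1\}$ is a cleaner route than the paper's, which manufactures the tripotent as $f-g$ from the gs-Drazin idempotents of $\tfrac{a^2+a}{2}$ and $\tfrac{a^2-a}{2}$, and your method gives $p\in comm^2(a)$ directly (the paper only reaches this in Proposition~2.9). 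One small slip: in your $(3)\Rightarrow(2)$ the expansion is $a-a^3=w(1-3p^2-3pw-w^2)$, not $-w(3p^2+3pw+w^2)$; the conclusion is unaffected. But none of this addresses the statement you were asked to prove.
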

\begin{enumerate}
\item [(1)]{\it $a$ has generalized Hirano inverse.}
\vspace{-.5mm}
\item [(2)]{\it $a^2\in A$ has gs-Drazin inverse.}
\vspace{-.5mm}
\item [(3)]{\it There exists $b\in comm(a)$ such that $$b=(ab)^2, a^2-a^2b\in A^{qnil}.$$}
\end{enumerate}
\begin{proof} $(1)\Rightarrow (3)$ By hypothesis, there exists $c\in comm(a)$ such that $c=c^2a$ and $a^2-ac\in A^{qnil}$. Let $b=c^2$. Then $b\in comm(a), b=c^4a^2=b^2a^2=(ab)^2$. Moreover, we have $a^2-a^2b=a^2-ac\in A^{qnil}$, as desired.

$(3)\Rightarrow (2)$ By hypothesis, there exists $b\in comm(a)$ such that $b=(ab)^2, a^2-a^2b\in A^{qnil}.$ Hence $b\in comm(a^2), b=ba^2b$. Therefore $a^2\in A$ has gs-Drazin inverse.

$(2)\Rightarrow (1)$ Since $a^2\in A$ has gs-Drazin inverse, then there exists $c\in comm^2(a^2)$ such that $c=c^2a^2$ and $a^2-a^2c\in A^{qnil}$.
Set $b=ac$. Then $ab=ba, b=b^2a$ and $a^2-ab=a^2-a^2c\in A^{qnil}$. Therefore $a$ has generalized Hirano inverse, as asserted.\end{proof}

\begin{thm} Let $A$ be a Banach algebra, and let $a\in A$. Then $a$ has generalized Hirano inverse if and only if\end{thm}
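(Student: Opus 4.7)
The plan is to bridge the generalized Hirano condition to the two stated equivalents by funneling everything through Lemmas 2.2 and 2.3, and then to produce the required commuting tripotent via the spectral idempotent of the g-Drazin inverse together with an analytic square-root extracted by Neumann series inside the corner.

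For the equivalence with ``$a$ has g-Drazin inverse and $a-a^3\in A^{qnil}$'', I would chain the two preceding lemmas: Lemma 2.3 identifies the generalized Hirano property of $a$ with the gs-Drazin property of $a^2$, and Lemma 2.2 converts the latter into ``$a^2$ has g-Drazin inverse and $a^2-a^4\in A^{qnil}$''. Two routine exchanges finish this step: (i) $a^2$ has g-Drazin inverse iff $a$ does — the direction $(\Rightarrow)$ uses $(a^d)^2$ as a direct witness, and the direction $(\Leftarrow)$ follows from the observation that $0$ is isolated in $\sigma(a)$ iff it is isolated in $\sigma(a^2)$ (or by lifting the spectral idempotent of $a^2$); (ii) $a-a^3\in A^{qnil}$ iff $a^2-a^4\in A^{qnil}$, handled by the identities $a^2-a^4=a(a-a^3)$ and $(a-a^3)^2=(a^2-a^4)(1-a^2)$ combined with Lemma 2.1(2) and the fact that a square of a quasinilpotent is quasinilpotent.

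For the tripotent characterization, the easy direction ``$\exists\,p^3=p\in\mathrm{comm}(a)$ with $a-p\in A^{qnil}$'' $\Longrightarrow$ the other conditions is a direct expansion. Write $a=p+q$ with $q\in A^{qnil}$ commuting with $p$; the binomial expansion gives $a-a^3=q(1-3p^2-3pq-q^2)\in A^{qnil}$ by Lemma 2.1(2), while the idempotent $e:=1-p^2\in\mathrm{comm}(a)$ is a spectral idempotent for $a$ (indeed $ae=q(1-p^2)\in A^{qnil}$, and $ap^2=p+qp^2$ is invertible in the corner $p^2Ap^2$ as a quasinilpotent perturbation of the unit $p$, using that $p^3=p$ makes $p$ its own inverse there). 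So $a$ has g-Drazin inverse and $a-a^3\in A^{qnil}$.

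For the converse, let $f:=a^{\pi}$ and write $a=a_1+a_2$ with $a_1:=a(1-f)$ invertible in the corner $(1-f)A(1-f)$ and $a_2:=af\in A^{qnil}$. Multiplying $a-a^3\in A^{qnil}$ by $1-f$ and then by $a_1^{-1}$ inside the corner yields $(1-f)-a_1^2\in A^{qnil}$, that is, $a_1^2=(1-f)-w$ for some quasinilpotent $w$ commuting with $a_1$. The Neumann-type series $s:=\sum_{n\ge 0}\binom{1/2}{n}(-w)^n$ converges in the corner because the spectral radius of $w$ is zero, and satisfies $s^2=a_1^2$ and $s-(1-f)\in A^{qnil}$. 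Setting $p:=a_1 s^{-1}$ gives $p^2=a_1^2s^{-2}=1-f$, hence $p^3=p$, and $a_1-p=a_1s^{-1}(s-(1-f))\in A^{qnil}$ by Lemma 2.1(2); together with $a_2\in A^{qnil}$ and Lemma 2.1(1) applied to the commuting pair $a_1-p$ and $a_2$, this gives $a-p\in A^{qnil}$, while $p\in\mathrm{comm}(a)$ follows from the Pierce separation plus the fact that $s$ (hence $p$) is a power series in $a_1$. The single step that carries real analytic content is this extraction of the square root of $a_1^2$ in the corner; everything else is algebraic rearrangement using Lemma 2.1 and the Pierce decomposition.
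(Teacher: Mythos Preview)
Your proof of the stated equivalence (generalized Hirano $\Leftrightarrow$ g-Drazin plus $a-a^{3}\in A^{qnil}$) is correct and uses the same skeleton as the paper: reduce via Lemma~2.3 to the gs-Drazin property of $a^{2}$, then unpack that with Lemma~2.2. Your passage between $a-a^{3}\in A^{qnil}$ and $a^{2}-a^{4}\in A^{qnil}$ through $a^{2}-a^{4}=a(a-a^{3})$ and $(a-a^{3})^{2}=(a^{2}-a^{4})(1-a^{2})$ is exactly what the paper does in the forward direction; in the backward direction the paper takes an unnecessary detour through $b=\tfrac{a^{2}+a}{2}$, $c=\tfrac{a^{2}-a}{2}$ to reach $a^{2}-a^{4}\in A^{qnil}$, whereas your one-line argument via Lemma~2.1(2) is cleaner. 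One small slip: in item (i) your direction labels are swapped---it is the implication ``$a$ g-Drazin $\Rightarrow$ $a^{2}$ g-Drazin'' that is witnessed by $(a^{d})^{2}$, while the converse is the one needing the spectral-isolation remark (the paper simply cites \cite{Y} for this).

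Your third and fourth paragraphs prove a different statement, namely the tripotent characterization, which in the paper is a separate theorem (Theorem~2.7). There the approaches genuinely diverge. The paper obtains the tripotent algebraically: by Lemma~2.6 the elements $\tfrac{a^{2}\pm a}{2}$ have gs-Drazin inverses, yielding commuting idempotents $f,g\in\mathrm{comm}^{2}(a)$ with $\tfrac{a^{2}+a}{2}-f$ and $\tfrac{a^{2}-a}{2}-g$ quasinilpotent, and then $e:=f-g$ is the desired tripotent. Your route---working in the Pierce corner $(1-a^{\pi})A(1-a^{\pi})$, writing $a_{1}^{2}=(1-a^{\pi})-w$ with $w$ quasinilpotent, and extracting a square root $s$ by the binomial series to set $p=a_{1}s^{-1}$---is analytically heavier but perfectly valid in a Banach algebra and has the advantage of being self-contained (no appeal to Lemma~2.6). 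The paper's construction, by contrast, automatically lands in $\mathrm{comm}^{2}(a)$, which it later exploits in Proposition~2.9; your $p$, built from a convergent series in $a$ and $a^{\pi}$, also lies in $\mathrm{comm}^{2}(a)$, so nothing is lost.
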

\begin{enumerate}
\item [(1)]{\it $a$ has g-Drazin inverse;}
\vspace{-.5mm}
\item [(2)]{\it $a-a^3\in A^{qnil}$.}
\end{enumerate}\begin{proof} $\Longrightarrow$. In view of Lemma 2.3, $a^2\in A$ has gs-Drazin inverse. It follows by Lemma 2.2 and \cite[Theorem 2.7]{Y}
Furthermore, $a(a-a^3)=a^2-a^4\in A^{qnil}$, and so $(a-a^3)^2=a(a-a^3)(1-a^2)\in A^{qnil}$ by Lemma 2.1. Therefore $a-a^3\in A^{qnil}$, as required.

$\Longleftarrow$ Set $b=\frac{a^2+a}{2}$ and $c=\frac{a^2-a}{2}$. Then we we check that
$$\begin{array}{c}
b^2-b=\frac{1}{4}(a^4+2a^3-a^2-2a)=\frac{1}{4}(a+2)(a^3-a);\\
c^2-c=\frac{1}{4}(a^4-2a^3-a^2+2a)=\frac{1}{4}(a-2)(a^3-a).
\end{array}$$
Hence $b^2-b,c^2-c\in A^{qnil}$. Since $a$ has g-Drazin inverse, it follows by \cite[Theorem 2.7]{Y} that $a^2$ has g-Drazin inverse.
Clearly, $a^2=b+c$, and so $a^2-a^4=(b+c)-(b+c)^2=(b-b^2)+(c-c^2)-2bc.$
On the other hand, $bc=\frac{a^4-a^2}{4}$, and so $$\frac{3}{2}(a^2-a^4)=(b-b^2)+(c-c^2)\in A^{qnil}.$$ In light of Lemma 2.3, $a^2\in A$ has gs-Drazin inverse.
This completes the proof by Lemma 2.3.\end{proof}

\begin{cor} Let $A$ be a Banach algebra, let $a\in A$. If $a\in A$ has generalized Hirano inverse, then $a^n\in A$ has generalized Hirano inverse for any $n\in {\Bbb N}$.\end{cor}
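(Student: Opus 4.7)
The natural route is to invoke the characterization just proved in Theorem 2.4: $a$ has generalized Hirano inverse iff $a$ has g-Drazin inverse and $a-a^3\in A^{qnil}$. So I need to verify these two conditions for $a^n$.

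For the first condition, the fact that $a^n$ has g-Drazin inverse whenever $a$ does is standard (indeed $(a^n)^d=(a^d)^n$), and can either be cited or read off from the spectral idempotent decomposition $a=u+q$ with $u$ having g-Drazin inverse, $q\in A^{qnil}$, and $uq=qu$.

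For the second condition, the key identity is the algebraic factorization
\[
a^n-a^{3n}=a^n(1-a^{2n})=a^{n-1}\,(a-a^3)\,\bigl(1+a^2+a^4+\cdots+a^{2(n-1)}\bigr),
\]
obtained from $1-a^{2n}=(1-a^2)(1+a^2+\cdots+a^{2(n-1)})$ together with $a(1-a^2)=a-a^3$. The element $a-a^3$ lies in $A^{qnil}$ by hypothesis and commutes with both $a^{n-1}$ and the polynomial factor $1+a^2+\cdots+a^{2(n-1)}$. Since commuting elements $x,y$ automatically satisfy $x^2y=xyx$ and $y^2x=yxy$, I can apply Lemma 2.1(2) twice: first to conclude $a^{n-1}(a-a^3)\in A^{qnil}$, then to multiply by $1+a^2+\cdots+a^{2(n-1)}$. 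Hence $a^n-a^{3n}\in A^{qnil}$.

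Combining the two verifications and applying Theorem 2.4 to $a^n$ yields that $a^n$ has generalized Hirano inverse. There is no real obstacle here, since everything reduces to a polynomial identity together with the closure of $A^{qnil}$ under multiplication by commuting elements; the only point to be careful about is to ensure that the quasinilpotent factor $a-a^3$ commutes with the remaining factors so that Lemma 2.1(2) is legitimately applicable.
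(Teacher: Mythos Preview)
Your proof is correct and follows essentially the same route as the paper: invoke Theorem~2.4, note that $a^n$ inherits a g-Drazin inverse, factor $a^n-(a^n)^3=(a-a^3)f(a)$ for a polynomial $f$, and use Lemma~2.1(2) with the commuting factors to conclude quasinilpotency. Your version is slightly more explicit in writing out $f(a)=a^{n-1}(1+a^2+\cdots+a^{2(n-1)})$ and in spelling out the two applications of Lemma~2.1(2), but the argument is the same.
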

\begin{proof} In view of Theorem 2.4, $a\in A$ has g-Drazin inverse and $a-a^3\in A^{qnil}$. By virtue of \cite[Theorem 2.7]{Y}, $a^n\in A$ has g-Drazin inverse. It follows by ???? that
$a^n-(a^n)^3=a^n-(a^3)^n=(a-a^3)f(a)\in A^{qnil}$ for some $f(t)\in {\Bbb Z}[t]$. According to Theorem 2.4, $a^n\in A$ has generalized Hirano inverse, as asserted.\end{proof}

\begin{lem} Let $A$ be a Banach algebra, and let $a\in A$. Then $a$ has generalized Hirano inverse if and only if $\frac{a^2+a}{2}$ and $\frac{a^2-a}{2}$ have gs-Drazin inverses.\end{lem}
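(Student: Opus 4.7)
The plan is to reduce the equivalence to Theorem 2.4 via Lemma 2.2, recycling the algebraic identities
\[
b^{2}-b=\tfrac{1}{4}(a+2)(a^{3}-a),\qquad c^{2}-c=\tfrac{1}{4}(a-2)(a^{3}-a)
\]
established in the proof of Theorem 2.4, where $b:=\tfrac{a^{2}+a}{2}$ and $c:=\tfrac{a^{2}-a}{2}$. No genuinely new manipulation is required; the content is repackaging what Theorem 2.4 already produces.

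For the forward direction, assume $a$ has generalized Hirano inverse. Theorem 2.4 gives that $a$ has g-Drazin inverse and $a-a^{3}\in A^{qnil}$. Since $b$ and $c$ are polynomials in $a$, they commute with $a$ and are themselves g-Drazin invertible (sums and products of commuting g-Drazin invertibles inherit the property). From the two displayed factorisations together with Lemma 2.1(2), applied to the commuting pairs $(a\pm 2,\,a^{3}-a)$, I conclude $b-b^{2},\,c-c^{2}\in A^{qnil}$, and Lemma 2.2 then delivers gs-Drazin inverses for $b$ and $c$.

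For the converse, assume $b$ and $c$ have gs-Drazin inverses. Lemma 2.2 gives g-Drazin invertibility and $b-b^{2},\,c-c^{2}\in A^{qnil}$. Since $b,c$ commute, their sum $a^{2}=b+c$ has g-Drazin inverse, and \cite[Theorem 2.7]{Y} transfers g-Drazin invertibility from $a^{2}$ back to $a$. The identity
\[
\tfrac{3}{2}(a^{2}-a^{4})=(b-b^{2})+(c-c^{2})
\]
already derived in the proof of Theorem 2.4 places $a^{2}-a^{4}$ in $A^{qnil}$. The factorisation $(a-a^{3})^{2}=(a^{2}-a^{4})(1-a^{2})$ combined with Lemma 2.1(2) gives $(a-a^{3})^{2}\in A^{qnil}$, and the spectral-radius relation $r(x)^{2}=r(x^{2})$ upgrades this to $a-a^{3}\in A^{qnil}$. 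Theorem 2.4 then supplies the generalized Hirano inverse of $a$.

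The only mildly delicate points are the passage from $a^{2}$ to $a$ for g-Drazin invertibility and the passage from $(a-a^{3})^{2}$ to $a-a^{3}$ inside $A^{qnil}$; both rest on standard spectral facts in Banach algebras rather than on any fresh algebraic argument. Everything else is bookkeeping on top of Theorem 2.4 and Lemma 2.2.
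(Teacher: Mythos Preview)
Your approach mirrors the paper's almost exactly: both directions rest on Lemma~2.2, the identities $b^{2}-b=\tfrac14(a+2)(a^{3}-a)$ and $c^{2}-c=\tfrac14(a-2)(a^{3}-a)$, and the relation $\tfrac32(a^{2}-a^{4})=(b-b^{2})+(c-c^{2})$ taken from the proof of Theorem~2.4. The only structural difference is that in the converse the paper finishes via Lemma~2.3 (showing $a^{2}$ has gs-Drazin inverse), whereas you detour through Theorem~2.4 by separately recovering the g-Drazin inverse of $a$ and the quasinilpotence of $a-a^{3}$; both routes are valid.

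One justification you give is not correct as stated: the parenthetical ``sums and products of commuting g-Drazin invertibles inherit the property'' fails for sums in general Banach algebras (take $a=1$ and $b$ a diagonal operator with spectrum $\{0\}\cup\{-1+1/n:n\ge1\}$; each is g-Drazin invertible, yet $a+b$ has $0$ as an accumulation point of its spectrum). Fortunately you do not need this. In the forward direction the hypothesis $a-a^{3}\in A^{qnil}$ forces $\sigma(a)\subseteq\{-1,0,1\}$, so every polynomial in $a$ has finite spectrum and is automatically g-Drazin invertible. In the converse, once you know $a^{2}-a^{4}\in A^{qnil}$ you already have $\sigma(a^{2})\subseteq\{0,1\}$, which gives g-Drazin invertibility of $a^{2}$ without appealing to closure under commuting sums. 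Replacing your parenthetical by this spectral observation repairs the argument.
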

\begin{proof} $\Longrightarrow$ In light of Lemma 2.3, $a^2$ has gs-Drazin inverse, and so it has g-Drazin inverse. It follows by Lemma 2.2, that $a\in A$ has g-Drazin inverse. Therefore
$b:=\frac{a^2+a}{2}$ has g-Drazin inverses. Furthermore, $$\begin{array}{lll}
b^2-b&=&\frac{1}{4}(a+2)(a^3-a)\\
&\in &A^{qnil}.
\end{array}$$ In light of Lemma Lemma 2.2, $b\in A$ has gs-Drazin inverse. Likewise, $\frac{a^2-a}{2}$ has gs-Drazin inverse, as desired.

$\Longleftarrow$ Set $b=\frac{a^2+a}{2}$ and $c=\frac{a^2-a}{2}$. Then $a^2=b+c$. Since $bc=cb$, as in the proof of Theorem 2.4, $a^2\in A$ has gs-Drazin inverse.
This completes the proof by Theorem 2.4.\end{proof}

We have accumulated all the information necessary to prove the following.

\begin{thm} Let $A$ be a Banach algebra, and let $a\in A$. Then the following are equivalent:\end{thm}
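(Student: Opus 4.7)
The plan is to establish the equivalence by building a commuting tripotent $p$ with $a - p \in A^{qnil}$ out of the Hirano data, and conversely. Theorem~2.4 has already reduced the Hirano property to ``$a$ has g-Drazin inverse and $a - a^3 \in A^{qnil}$,'' so the bulk of the work is converting between this condition and a tripotent decomposition $a = p + w$.

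For the direction that starts from such a decomposition with $p^3 = p \in comm(a)$ and $w := a - p \in A^{qnil}$, note that $p \in comm(a)$ forces $pw = wp$. Using $p^3 = p$, the binomial expansion yields $a - a^3 = w - 3p^2 w - 3pw^2 - w^3$; each summand is a product of $w$ with an element commuting with $w$, so is quasinilpotent by Lemma~2.1(2), and the sum lies in $A^{qnil}$ by Lemma~2.1(1). For g-Drazin invertibility I would take the idempotent $e := 1 - p^2 \in comm(a)$: a short computation gives $ae = we \in A^{qnil}$ and $a + e = (1 + p - p^2) + w$, where $1 + p - p^2$ has explicit inverse $p + (1 - p^2)$ (verified using $p^3 = p$ and $p^4 = p^2$); since $w$ is a commuting quasinilpotent, $a + e \in U(A)$, and Theorem~2.4 then produces the generalized Hirano inverse.

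For the reverse direction I would invoke Lemma~2.6 to get gs-Drazin inverses for the two polynomials $\frac{a^2 + a}{2}$ and $\frac{a^2 - a}{2}$, which by the characterization of gs-Drazin inverses produces idempotents $e_1, e_2$ in the respective double commutants with $\frac{a^2+a}{2} - e_1$ and $\frac{a^2-a}{2} - e_2$ quasinilpotent. Because $\frac{a^2 \pm a}{2}$ are polynomials in $a$, one has $comm(a) \subseteq comm\bigl(\frac{a^2 \pm a}{2}\bigr)$, which upgrades both memberships to $e_1, e_2 \in comm^2(a)$ and simultaneously forces $e_1 e_2 = e_2 e_1$. Setting $p := e_1 - e_2$, the identity $p^3 = e_1 - 3 e_1 e_2 + 3 e_1 e_2 - e_2 = p$ shows $p$ is tripotent, and
\[
a - p = \Bigl(\tfrac{a^2+a}{2} - e_1\Bigr) - \Bigl(\tfrac{a^2-a}{2} - e_2\Bigr)
\]
is a difference of commuting quasinilpotents, hence lies in $A^{qnil}$ by Lemma~2.1(1).

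The main obstacle is commutant bookkeeping: ensuring that the idempotents produced by Lemma~2.6 commute with each other and actually lie in $comm^2(a)$ rather than merely $comm(a)$, together with the Peirce-style check that $1 + p - p^2$ is a unit. If the equivalent conditions in the theorem also include the weaker version with $p \in comm(a)$, one direction is automatic from $comm^2(a) \subseteq comm(a)$, and the other is handled by the construction above, which already produces $p$ in $comm^2(a)$.
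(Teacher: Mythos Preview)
Your forward direction (Hirano $\Rightarrow$ tripotent decomposition) is essentially the paper's argument: invoke Lemma~2.6 to get gs-Drazin inverses of $\frac{a^2\pm a}{2}$, extract commuting idempotents $e_1,e_2$ (the paper calls them $f,g$), set $p=e_1-e_2$, and check $p^3=p$ and $a-p\in A^{qnil}$. Your commutant bookkeeping is in fact a bit more careful than the paper's sketch, since you explicitly observe that $comm(a)\subseteq comm\!\bigl(\frac{a^2\pm a}{2}\bigr)$ forces $e_1,e_2\in comm^2(a)$, which the paper only revisits later in Proposition~2.9.

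Your backward direction is correct but genuinely different. The paper argues that from $a=p+w$ one gets $a^2-p^2=(2p+w)w\in A^{qnil}$, so the idempotent $p^2$ witnesses that $a^2$ has gs-Drazin inverse, and then Lemma~2.3 closes the loop. You instead verify the two conditions of Theorem~2.4 directly: expand $a-a^3$ as a commuting polynomial in $w$ with no constant term, and build the spectral idempotent $e=1-p^2$ by hand, checking that $ae=we\in A^{qnil}$ and $a+e=(1+p-p^2)+w\in U(A)$ via the explicit inverse $1-p^2+p$. Your route avoids the detour through Lemma~2.3 and gives the g-Drazin data concretely, at the cost of the Peirce-style unit check; the paper's route is shorter once Lemma~2.3 is in place and never needs to identify $a^{\pi}$ explicitly.
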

\begin{enumerate}
\item [(1)]{\it $a\in A$ has generalized Hirano inverse.}
\vspace{-.5mm}
\item [(2)]{\it There exists $e^3=e\in comm(a)$ such that $a-e\in A^{qnil}$.}
\end{enumerate}
\begin{proof} $\Longrightarrow$ Let $b=\frac{a^2+a}{2}$ and $c=\frac{a^2-a}{2}$. In view of Lemma 2.6, $b$ and $c$ have gs-Drazin inverses.
Then there exist $f^2=f\in comm^2(b)$ and $g^2=g\in comm^2(c)$ such that $$b-f,c-g\in A^{qnil}.$$
As $ab=ba$ and $ac=ca$, we see that $fa=af$ and $ga=ag$. Hence $gb=bg$. This implies that $fg=gf$.
Therefore $a=b-c=(f-g)+(b-f)-(c-g)$. Clearly, $(b-f)(c-g)=(c-g)(b-f)$.
In light of $(b-f)-(c-g)\in A^{qnil}$. Moreover, we check that $(f-g)^3=f-g$. Set $e=f-g$. Then $a-e\in A^{qnil}$, as required.

$\Longleftarrow$ By hypothesis, there exist $e^3=e\in comm(a)$ such that $w:=a-e\in A^{qnil}$. Hence, $a=e+w$, and so $a^2=e^2+(2e+w)w$. Then $a^2-e^2=(2e+w)w\in A^{qnil}$. In light of ???, $a^2\in A$ has gs-Drazin inverse. Therefore we complete the proof, by ???.\end{proof}

\begin{cor} Let ${\Bbb C}$ be the field of complex numbers, and let $A\in M_n({\Bbb C})$. Then the following are equivalent:\end{cor}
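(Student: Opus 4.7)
The corollary specializes Theorem 2.7 to $M_n(\mathbb{C})$, where the crucial simplification is that $M_n(\mathbb{C})^{qnil}$ coincides with the set of nilpotent matrices (every quasinilpotent operator on a finite-dimensional space has spectrum $\{0\}$ and hence is nilpotent by Cayley--Hamilton). My plan is therefore to transport Theorem 2.7 into the finite-dimensional setting and then translate the condition ``$A-E$ nilpotent with $E^3=E$, $EA=AE$'' into a purely spectral statement, namely $\sigma(A)\subseteq\{-1,0,1\}$. So I expect the three equivalent items in the corollary to read: (1) $A$ has generalized Hirano inverse; (2) there exists $E\in M_n(\mathbb{C})$ with $E^3=E$, $EA=AE$ and $A-E$ nilpotent; (3) $\sigma(A)\subseteq\{-1,0,1\}$.

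The equivalence (1)$\Leftrightarrow$(2) is essentially free: it is exactly Theorem 2.7 after replacing $A^{qnil}$ by the set of nilpotent matrices. For (2)$\Rightarrow$(3) I would argue as follows. The polynomial $t^3-t=t(t-1)(t+1)$ has simple roots, so $E$ is diagonalisable with $\sigma(E)\subseteq\{-1,0,1\}$. Since $E$ commutes with $A$, the eigenspaces $V_{-1},V_0,V_1$ of $E$ are $A$-invariant and $\mathbb{C}^n=V_{-1}\oplus V_0\oplus V_1$. Restricting to $V_\lambda$, the hypothesis that $A-E$ is nilpotent forces $A|_{V_\lambda}-\lambda I$ to be nilpotent, so every eigenvalue of $A|_{V_\lambda}$ equals $\lambda$. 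Consequently $\sigma(A)\subseteq\{-1,0,1\}$.

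For (3)$\Rightarrow$(2) I would invoke the Jordan--Chevalley decomposition: write $A=S+N$ where $S$ is semisimple, $N$ is nilpotent, $SN=NS$, and both $S$ and $N$ are polynomials in $A$ (and hence automatically commute with $A$). Since $\sigma(S)=\sigma(A)\subseteq\{-1,0,1\}$ and $S$ is diagonalisable, the minimal polynomial of $S$ divides $t^3-t$, so $S^3=S$. Taking $E:=S$ gives a tripotent in $\operatorname{comm}(A)$ with $A-E=N$ nilpotent, which is condition (2).

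The argument is straightforward once one has Theorem 2.7 and the Jordan--Chevalley decomposition; the only mild subtlety is to notice that $S$ lies in $\operatorname{comm}(A)$ automatically because it is a polynomial in $A$, so no extra commutation check is required. No single step looks like a genuine obstacle; the point of the corollary is really to record the clean spectral characterisation in the matrix case.
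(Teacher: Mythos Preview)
Your argument is correct; the only omission is that the corollary in the paper actually lists a fourth equivalent condition, namely that $A$ is similar to a block-diagonal matrix of Jordan blocks with eigenvalues in $\{-1,0,1\}$. Of course this is immediate from your condition (3) via the Jordan canonical form, so nothing of substance is missing.

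Your route, however, is not the one the paper takes. The paper does not go through Theorem 2.7 (the tripotent characterisation) at all. Instead it invokes Lemma 2.3/Theorem 2.4 to say that $A$ has generalized Hirano inverse if and only if $A^2$ has gs-Drazin inverse, and then appeals to the known spectral description of gs-Drazin invertibility in $M_n(\mathbb{C})$ (the eigenvalues of $A^2$ must lie in $\{0,1\}$), citing an external reference for this last step. From there the eigenvalue condition on $A$ and the Jordan form follow at once.

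Your approach via Theorem 2.7 and the Jordan--Chevalley decomposition is arguably more transparent and entirely self-contained: it explains directly why the tripotent $E$ must be the semisimple part of $A$, and it does not lean on an outside lemma about gs-Drazin inverses. The paper's approach is shorter on the page but outsources the spectral work; yours does that work explicitly and makes the link between ``tripotent plus nilpotent'' and ``spectrum in $\{-1,0,1\}$'' completely visible.
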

\begin{enumerate}
\item [(1)]{\it $A$ has generalized Hirano inverse.}
\vspace{-.5mm}
\item [(2)]{\it $A$ is the sum of a tripotent and a nilpotent matrices that commutate.}
\vspace{-.5mm}
\item [(3)]{\it The eigenvalues of $A$ are only $-1, 0$ or $1$.}
\vspace{-.5mm}
\item [(4)]{\it $A$ is similar to $diag(J_1,\cdots,J_r)$, where $$J_i=\left(
\begin{array}{ccccc}
\lambda &1&&&\\
&\lambda&&\ddots&\\
&&&\ddots&1\\
&&&&\lambda
\end{array}
\right),\lambda=-1, 0~\mbox{or} ~1.$$}
\end{enumerate}\begin{proof} ????????? In view of Theorem 2.4, $A\in M_n({\Bbb C})$ has Hirano inverse if and only if $A^2\in M_n({\Bbb C})$ has gs-Drazin inverse if and only if the eigenvalues of $A^2$ are only $0$ or $1$. Therefore we are done by~\cite[Lemma 2.2]{W}.\end{proof}

We close with a characterization of a generalized Hirano inverse in terms of its double commutant.

\begin{prop} Let $A$ be a Banach algebra, and let $a\in A$. Then the following are equivalent:\end{prop}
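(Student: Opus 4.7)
My plan is to prove the double-commutant refinement of Theorem 2.7, namely that the witness $b$ (equivalently, the tripotent $e$) produced there can always be chosen in $comm^2(a)$ rather than just in $comm(a)$. The condition will therefore read: there exists $b\in comm^2(a)$ (equivalently $e^3=e\in comm^2(a)$) with $b=bab$, $a^2-ab\in A^{qnil}$ (equivalently $a-e\in A^{qnil}$). One implication is immediate since $comm^2(a)\subseteq comm(a)$, so the real work is in the forward direction.

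For the forward direction, I would assume $a$ has a generalized Hirano inverse and invoke Lemma~2.6 for the splitting $a^2=b+c$ with $b=\tfrac{a^2+a}{2}$, $c=\tfrac{a^2-a}{2}$: both $b$ and $c$ have gs-Drazin inverses. Using Lemma~2.2 together with the standard fact that the spectral idempotent of a g-Drazin invertible element lies in its double commutant, I obtain idempotents $f\in comm^2(b)$, $g\in comm^2(c)$ such that $b-f\in A^{qnil}$ and $c-g\in A^{qnil}$.

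The crucial observation is that $b$ and $c$ are polynomials in $a$. Consequently, any element commuting with $a$ automatically commutes with $b$ and with $c$, and hence, by the defining property of the double commutants $comm^2(b)$ and $comm^2(c)$, also commutes with $f$ and with $g$. This places $f,g\in comm^2(a)$. Setting $e:=f-g$ and repeating the algebraic manipulation in the proof of Theorem~2.7, I get $e^3=e$ and $a-e=(b-f)-(c-g)\in A^{qnil}$, now with $e\in comm^2(a)$. From this tripotent I would produce the Hirano witness $b$ as a polynomial in $a$ and $e$, automatically keeping it inside $comm^2(a)$.

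The main point to watch, rather than a deep obstacle, is that double-commutant membership propagates through every operation used: polynomial expressions in $a$, the passage from a g-Drazin invertible element to its spectral idempotent, and the recombination $e=f-g$. None of these is difficult individually, but they must be tracked, and the genuinely external input is the known double-commutant property of g-Drazin spectral idempotents in Banach algebras, which I would invoke as a black box.
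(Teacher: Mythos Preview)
Your proposal is correct and matches the paper's approach: use Lemma~2.6 to get gs-Drazin inverses of $\tfrac{a^2\pm a}{2}$, pull their spectral idempotents $f,g$ into $comm^2(a)$ via the polynomial-in-$a$ observation, and set $e=f-g$. One small caveat: the paper's explicit witness for condition~(3), namely $b=(a^2+1-e^2)^{-1}e^2$, involves an inverse rather than being a literal polynomial in $a$ and $e$, but this is harmless since $comm^2(a)$ is closed under taking inverses of its invertible elements.
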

\begin{enumerate}
\item [(1)]{\it $a$ has generalized Hirano inverse.}
\vspace{-.5mm}
\item [(2)]{\it There exists $e^3=e\in comm^2(a)$ such that $a-e\in A^{qnil}$.}
\vspace{-.5mm}
\item [(3)]{\it There exists $b\in comm^2(a)$ such that $$b=(ab)^2, a^2-a^2b\in A^{qnil}.$$}
\end{enumerate}\begin{proof} $(1)\Rightarrow (2)$ In view of Lemma ???, $\frac{a^2+a}{2}$ and $\frac{a^2-a}{2}$ have gs-Drazin inverses. As in the proof of Corollary ???, we can find idempotents $f,g\in comm^2(a)$ such that $a-(f-g)\in A^{qnil}$. Let $e=f-g$. Then $e^3=e\in comm^2(a)$, as required.

$(2)\Rightarrow (3)$ Set $b=(a^2+1-e^2)^{-1}e^2$. As in the proof of Corollary ???, we see that $b=(ab)^2, a^2-a^2b\in A^{qnil}.$ Since $e\in comm^2(a)$, we check that $b\in comm^2(a)$, as desired.

$(3)\Rightarrow (1)$ This is obvious by Corollary ????.\end{proof}

\section{Multiplicative property}

Let $A$ be a Banach algebra, and let $a,b \in A$. In \cite [Lemma 2.2]{L}, it was proved that $ab\in A^{qnil}$ if and only if $ba\in A^{qnil}$. We generalized this fact as follows.

\begin{lem} Let $A$ be a Banach algebra, and let $a,b,c,d\in A$. If $$\begin{array}{c}
(ac)^2a=(db)^2a\\
(ac)^2d=(db)^2d,
\end{array}$$ then then the following are equivalent:\end{lem}
\begin{enumerate}
\item [(1)]{\it $(ac)^2\in A^{qnil}$.}
\vspace{-.5mm}
\item [(2)]{\it $(bd)^2\in A^{qnil}$.}
\end{enumerate}\begin{proof} As $(ac)^2a=(db)^2a$. Then $acaca=dbdba$ which implies  $acacaca=dbdbaca$. Let $aca=a^{'}, c=c^{'}, dbd=d^{'}$ and $b=b^{'}$. Then we have
$a^{'}c^{'}a^{'}=d^{'}b^{'}a^{'}$. Also by $(ac)^2d=(db)^2d$ we have $acacd=dbdbd$ and so    $acacdbd=dbdbdbd$  which imlies $a^{'}c^{'}d^{'}=d^{'}b^{'}d{'}$. Let $(ac)^2\in A^{qnil}$, then $acac\in A^{qnil}$ which implies that $a^{'}c^{'}\in A^{qnil}$. By applying \cite[Lemma 3.1]{??} we conclude that $d^{'}b^{'}\in A^{qnil}$. The converse follows in a similar way.  \end{proof}

\begin{lem} Let $A$ be a Banach algebra, and let $a,b,c,d\in A$. If $$\begin{array}{c}
(ac)^2a=(db)^2a\\
(ac)^2d=(db)^2d,
\end{array}$$ then then the following are equivalent:\end{lem}
\begin{enumerate}
\item [(1)]{\it $(ac)^2\in A^{d}$.}
\vspace{-.5mm}
\item [(2)]{\it $(bd)^2\in A^{d}$.}
\end{enumerate}
\begin{proof} By the same argument in Lemma 3.1, Let $aca=a^{'}, c=c^{'}, dbd=d^{'}$ and $b=b^{'}$. Then we have
$a^{'}c^{'}a^{'}=d^{'}b^{'}a^{'}$ and $a^{'}c^{'}d^{'}=d^{'}b^{'}d^{'}$. Let $(ac)^2\in A^{d}$. This implies that $a^{'}c^{'}a\in A^{d}$. By using \cite[Theorem 3.2]{??} we deduce that $b^{'}d^{'}\in A^{d}$. The converse can be obtained in a similar route.\end{proof}

We come now to the main result of this section.

\begin{thm} Let $A$ be a Banach algebra, and let $a,b,c,d\in A$. If $$\begin{array}{c}
(ac)^2a=(db)^2a\\
(ac)^2d=(db)^2d,
\end{array}$$ then the following are equivalent:\end{thm}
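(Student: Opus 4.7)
The plan is to apply Theorem 2.4, which characterizes the generalized Hirano inverse as the conjunction of g-Drazin invertibility with quasinilpotence of $x - x^3$, and then to transfer each of these two conditions separately between $(ac)^2$ and $(bd)^2$. The g-Drazin half is immediate from Lemma 3.2; the quasinilpotent half, namely $(ac)^2 - (ac)^6 \in A^{qnil}$ if and only if $(bd)^2 - (bd)^6 \in A^{qnil}$, is the substantive content of the theorem.

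To handle the quasinilpotent half, set $P := (ac)^2 - (ac)^6$ and $Q := (db)^2 - (db)^6$. I would first iterate the hypotheses: by induction on $k$, using the identity $(ac)^2 d = (db)^2 d$ to absorb each extra factor of $(ac)^2$ into a $(db)^2$, one obtains $(ac)^{2k} a = (db)^{2k} a$ and $(ac)^{2k} d = (db)^{2k} d$ for every $k \geq 1$. In particular $Pa = Qa$ and $Pd = Qd$, so $(P-Q)a = (P-Q)d = 0$. Combining this with the factorization $(db)^k a = d\,(bd)^{k-1}\,ba$ for $k \geq 1$ yields $(P-Q)(db)^k a = (P-Q)d \cdot (bd)^{k-1} ba = 0$, whence $(P-Q)Qa = 0$, and so $P^2 a = P(Pa) = P(Qa) = Q(Qa) = Q^2 a$. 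The mirror argument gives $P^2 d = Q^2 d$.

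Next, I would apply Lemma 3.1 to the quadruple
\[
\alpha := a,\quad \gamma := cac\bigl(1 - (ac)^4\bigr),\quad \delta := d,\quad \beta := bdb\bigl(1 - (db)^4\bigr).
\]
A direct calculation yields $\alpha\gamma = P$, $\delta\beta = Q$, and $\beta\delta = (bd)^2 - (bd)^6$; and the identities just established, $P^2 a = Q^2 a$ and $P^2 d = Q^2 d$, are precisely the hypotheses $(\alpha\gamma)^2\alpha = (\delta\beta)^2\alpha$ and $(\alpha\gamma)^2\delta = (\delta\beta)^2\delta$ required by Lemma 3.1. That lemma then delivers $P^2 \in A^{qnil}$ if and only if $\bigl[(bd)^2 - (bd)^6\bigr]^2 \in A^{qnil}$. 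Since the spectral radius in any Banach algebra satisfies $r(x^2) = r(x)^2$, quasinilpotence of $x$ is equivalent to that of $x^2$, and we conclude $P \in A^{qnil}$ if and only if $(bd)^2 - (bd)^6 \in A^{qnil}$, as needed.

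The main obstacle is locating the right factorization of $P$ so that Lemma 3.1 becomes applicable. The naive substitution $c \mapsto c(1 - (ac)^4)$ produces $(ac - (ac)^5)^2 = (ac)^2 - 2(ac)^6 + (ac)^{10}$, which is \emph{not} $(ac)^2 - (ac)^6$. The correct choice, $P = a \cdot [cac(1 - (ac)^4)]$ (and the mirror choice for $Q$), is what allows the iterated hypotheses to be translated into the squared-product identities $P^2 a = Q^2 a$ and $P^2 d = Q^2 d$ that Lemma 3.1 demands, and it is precisely this bookkeeping between the linear identities $(P-Q)a = (P-Q)d = 0$ and the quadratic identities required by the lemma that constitutes the heart of the argument.
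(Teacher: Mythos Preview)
Your reduction via Lemma~3.2 for the g-Drazin half is fine, and the idea of manufacturing a new quadruple to which Lemma~3.1 applies is exactly right. The gap is in what you identify as ``the quasinilpotent half.'' You claim that what must be transferred is $(ac)^2-(ac)^6\in A^{qnil}$, but this is \emph{strictly weaker} than $ac-(ac)^3\in A^{qnil}$, which is what Theorem~2.4 actually demands. By the spectral mapping theorem, $x^2-x^6\in A^{qnil}$ is equivalent to $\sigma(x)\subseteq\{0,\pm 1,\pm i\}$, whereas $x-x^3\in A^{qnil}$ forces $\sigma(x)\subseteq\{0,\pm 1\}$. Concretely, take $bd=iI$: then $(bd)^2-(bd)^6=0$ and $bd$ is invertible (hence g-Drazin invertible), yet $bd-(bd)^3=2iI\notin A^{qnil}$, so $bd$ does \emph{not} have generalized Hirano inverse. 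Your argument therefore proves a biconditional that does not close the loop back to the Hirano property of $bd$.

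The repair is a one-symbol change: use $1-(ac)^2$ in place of $1-(ac)^4$ in $\gamma$ (and $1-(db)^2$ in $\beta$). Then $\alpha\gamma=(ac)^2-(ac)^4$ and $\beta\delta=(bd)^2-(bd)^4$, and your same bookkeeping (now only needing the iterated identities for $k=1,2$) verifies the hypotheses of Lemma~3.1. The conclusion becomes $(ac)^2-(ac)^4\in A^{qnil}\Leftrightarrow(bd)^2-(bd)^4\in A^{qnil}$; since $(bd-(bd)^3)^2=((bd)^2-(bd)^4)(1-(bd)^2)$ with commuting factors, this yields $bd-(bd)^3\in A^{qnil}$ as required. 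This is exactly the paper's route: it takes $a'=(1-(ac)^2)a$, $c'=c$, $b'=b$, $d'=(1-(db)^2)d$, so that $(a'c')^2=(ac-(ac)^3)^2$ and $(b'd')^2=(bd-(bd)^3)^2$, checks the Lemma~3.1 hypotheses for $a',b',c',d'$ by expanding and using $(ac)^{2k}a=(db)^{2k}a$, $(ac)^{2k}d=(db)^{2k}d$, and reads off the result.
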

\begin{enumerate}
\item [(1)]{\it $ac\in A$ has Hirano inverse.}
\vspace{-.5mm}
\item [(2)]{\it $bd\in A$ has Hirano inverse.}
\end{enumerate}
\begin{proof} Let $ac\in A$ have Hirano inverse, then by Theorem 2.4, $ac$ has g-Drazin inverse, so by \cite[??]{??} $(ac)^2$ has g-Drazin inverse and Lemma 3.2 implies that $(bd)^2$ has g-Drazin inverse. Also Theorem 2.4 implies that $ac-(ac)^3\in A^{qnil}$.  By Lemma 2.1, $ac(ac-(ac)^3)\in A^{qnil}$  which implies that $(ac)^2-(ac)^4\in A^{qnil}$.  Then we have, $(((1-acac)a)c)^2=((ac)^2-(ac)^4)(1-acac)\in A^{qnil}$.
Let $a^{'}=(1-acac)a, c^{'}=c, b^{'}=b$ and $d^{'}=(1-dbdb)d$. Then $(a^{'}c^{'})^2\in A^{qnil}$. Also
$$\begin{array}{lll}
( a^{'}c^{'})^2a^{'}&=&((1-acac)ac)^2((1-acac)a)\\
&=&(ac)^2-2(ac)^4+(ac)^6)(a-(ac)^2a)\\
&=&(ac)^2a-3(ac)^4a+3(ac)^6a-(ac)^8a\\
&=&(db)^2a-3(db)^4a+3(db)^6a-(db)^8a\\
&=&((1-dbdb)db)^2a^{'}\\
&=&(d^{'}b^{'})^2a^{'}.
\end{array}$$ By the same way we can prove that $( a^{'}c^{'})^2d^{'}=(d^{'}b^{'})^2d^{'}.$ Then by Lemma 3.2, $(b^{'}d^{'})^2\in A^{qnil}$ which implies that $(bd)^2-(bd)^4\in a^{qnil}$ and so
$$(bd-(bd)^3)^2=((bd)^2-(bd)^4)(1-bd)^2\in A^{qnil}$$. Then by ??? $bd$ has ps- Drazin invere and according to ??? $bd$ has generalized Hirano inverse.\end{proof}

\begin{cor} Let $A$ be a Banach algebra, and let $a,b,c,d\in A$. If $$\begin{array}{c}
aca=dba\\
dbd=acd,
\end{array}$$ then the following are equivalent:\end{cor}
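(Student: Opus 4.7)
The plan is to deduce this corollary directly from Theorem 3.3. My claim is that the two simpler hypotheses $aca=dba$ and $acd=dbd$ already force the (a priori stronger-looking) identities $(ac)^{2}a=(db)^{2}a$ and $(ac)^{2}d=(db)^{2}d$ that appear in Theorem 3.3; once those are verified, the equivalence ``$ac$ has generalized Hirano inverse if and only if $bd$ has generalized Hirano inverse'' follows verbatim from that theorem, with no further work.

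First I would left-multiply the relation $aca=dba$ by $ac$, which gives $(ac)^{2}a=ac(aca)=ac(dba)=(acd)ba$, and then invoke the second hypothesis $acd=dbd$ to rewrite this as $(dbd)ba=(db)^{2}a$. That establishes the first of the two needed identities.

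Next I would carry out the analogous manipulation on the second hypothesis: left-multiplying $acd=dbd$ by $ac$ yields $(ac)^{2}d=ac(acd)=ac(dbd)=(acd)bd$, and a second application of $acd=dbd$ turns this into $(dbd)bd=(db)^{2}d$. At this stage both hypotheses of Theorem 3.3 are in hand, and invoking that theorem closes the argument. It also seems worth listing the equivalent conditions explicitly in the statement, since the excerpt breaks off before they are written.

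There is essentially no serious obstacle here; the corollary is a formal reduction, and every substantive ingredient (Lemmas 3.1 and 3.2, together with Theorem 3.3) has already been proved. The only genuine point of care is to keep track of associativity and to apply each substitution on the correct side, because $a,b,c,d$ are not assumed to commute beyond the two given relations. In particular, the rewriting in both computations proceeds by absorbing a central occurrence of $acd$ into $dbd$, rather than by attempting any direct replacement of $ac$ by $db$, which is not licensed by the hypotheses.
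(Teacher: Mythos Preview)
Your reduction is exactly the one in the paper: from $aca=dba$ and $acd=dbd$ you correctly derive $(ac)^2a=(db)^2a$ and $(ac)^2d=(db)^2d$ by left-multiplying by $ac$ and substituting $acd=dbd$. That algebraic step is all the corollary contains.

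The only discrepancy is the target. Because the statement you were given breaks off, you guessed that the equivalent conditions are ``$ac$ has generalized Hirano inverse'' and ``$bd$ has generalized Hirano inverse'', and accordingly you invoke Theorem~3.3. In the paper, however, this corollary records the equivalence $(ac)^2\in A^{qnil}\iff (bd)^2\in A^{qnil}$, and the authors conclude by citing Lemma~3.1 rather than Theorem~3.3. Your argument, with Theorem~3.3 in place of Lemma~3.1, proves a perfectly true statement (indeed the same reduction feeds Lemma~3.1, Lemma~3.2, and Theorem~3.3 alike), but it is not the statement the paper calls Corollary~3.4. Simply swap the final citation from Theorem~3.3 to Lemma~3.1 and adjust the stated conclusion accordingly.
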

\begin{enumerate}
\item [(1)]{\it $(ac)^2\in A^{qnil}$.}
\vspace{-.5mm}
\item [(2)]{\it $(bd)^2\in A^{qnil}$.}
\end{enumerate}\begin{proof} Let $aca=dba$ and $dbd=acd$. Then $(ac)^2a=(db)^2a$ and
$(ac)^2d=(db)^2d$. So the result follows from Lemma 3.1.   \end{proof}

\begin{cor} Let $A$ be a Banach algebra, and let $a,b,c\in A$. If $aba=aca$, then the following are equivalent:\end{cor}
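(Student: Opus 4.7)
The plan is to derive this corollary directly from Corollary 3.4 by a variable specialization. Setting $d := a$ in that corollary, the first hypothesis $aca = dba$ becomes $aca = aba$, and the second hypothesis $dbd = acd$ becomes $aba = aca$; both are equivalent to the given identity $aba = aca$. Thus Corollary 3.4 applies and yields the equivalence of $(ac)^2 \in A^{qnil}$ and $(bd)^2 = (ba)^2 \in A^{qnil}$.

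To recover whichever symmetric pair of products the enumerated conditions $(1)$ and $(2)$ involve---whether $(ab)^2$ against $(ac)^2$, or $(ab)^2$ against $(ca)^2$, or $(ac)^2$ against $(ba)^2$---I would appeal to the classical fact \cite[Lemma 2.2]{L} that $xy \in A^{qnil}$ if and only if $yx \in A^{qnil}$, which collapses $\{(ab)^2,(ba)^2\}$ to one equivalence class and $\{(ac)^2,(ca)^2\}$ to another with respect to membership in $A^{qnil}$. Equivalently, the companion substitution $a_0 = a$, $b_0 = c$, $c_0 = b$, $d_0 = a$ in Corollary 3.4 directly gives the equivalence of $(ab)^2 \in A^{qnil}$ and $(ca)^2 \in A^{qnil}$, bypassing the intermediate use of Cline-type facts.

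Notice that this argument will \emph{not} lift to the Hirano level via Theorem 3.3: under $d := a$, its stronger hypothesis $(ac)^2 a = (db)^2 a$ becomes $acaca = ababa$, and one checks that from $aba = aca$ alone the best one can get is $ababa = acaba$ and $acaca = abaca$, so the needed identity does not follow. This is consistent with Corollary 3.5 being stated at the $A^{qnil}$ level (parallel to Corollary 3.4), rather than at the g-Drazin or Hirano level.

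The only step demanding verification is the substitution check itself, and this is a routine matter of reading off the identity. I therefore do not foresee any substantive obstacle: the corollary is a transparent specialization of Corollary 3.4 obtained by identifying two of its four free variables, together with an invocation of the Cline-type symmetry $xy \leftrightarrow yx$ for quasinilpotents.
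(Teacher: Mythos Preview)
Your proposal rests on a misreading of the corollary and on an algebraic slip.

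\textbf{What the corollary actually asserts.} The enumerated items are
\[
(1)\ ac\in A\text{ has Hirano inverse},\qquad
(2)\ ba\in A\text{ has Hirano inverse},
\]
exactly parallel to Theorem 3.3, \emph{not} to Corollary 3.4. So the target is the Hirano level, and the paper's proof is precisely the substitution $d:=a$ in Theorem 3.3.

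\textbf{The algebraic slip.} You claim that from $aba=aca$ one cannot obtain $(ac)^2a=(ab)^2a$, i.e.\ $acaca=ababa$. In fact one can:
\[
ababa \;=\; ab(aba) \;=\; ab(aca) \;=\; abaca \;=\; (aba)ca \;=\; (aca)ca \;=\; acaca.
\]
You found $ababa=acaba$ (substitute at the left block) and $acaca=abaca$ (substitute at the left block of $acaca$), but overlooked the companion identity $ababa=abaca$ (substitute at the \emph{right} block of $ababa$), which closes the chain. With $d=a$, both hypotheses of Theorem 3.3 collapse to this single identity, so Theorem 3.3 applies and gives the equivalence of Hirano invertibility of $ac$ and of $bd=ba$.

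\textbf{Summary.} Your route via Corollary 3.4 proves only a quasinilpotent statement, which is weaker than what Corollary 3.5 actually says; and your stated obstruction to using Theorem 3.3 is not an obstruction at all. The paper's one-line proof---set $d=a$ in Theorem 3.3 and check $acaca=ababa$ as above---is the correct argument.
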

\begin{enumerate}
\item [(1)]{\it $ac\in A$ has Hirano inverse.}
\vspace{-.5mm}
\item [(2)]{\it $ba\in A$ has Hirano inverse.}
\end{enumerate}
\begin{proof} Let $d=a$ It is easy to show that  $(ac)^2a=(db)^2a$ and
$(ac)^2d=(db)^d$. So the result follows from Theorem 3.3. \end{proof}

\begin{cor} Let $A$ be a Banach algebra, and let $a,b,c,d\in A$. If $acac=dbdb$, then the following are equivalent:\end{cor}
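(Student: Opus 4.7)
The plan is to deduce this corollary as an immediate consequence of Theorem 3.3 by verifying that the single hypothesis $acac=dbdb$ implies both of the pair of hypotheses $(ac)^2a=(db)^2a$ and $(ac)^2d=(db)^2d$ required there. Since $(ac)^2=acac$ and $(db)^2=dbdb$, it should suffice to post-multiply the given equation by $a$ and by $d$ respectively to recover the two identities needed.

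More explicitly, first I would right-multiply $acac=dbdb$ by $a$ to obtain $acaca=dbdba$, which is exactly $(ac)^2a=(db)^2a$. Next, right-multiplying $acac=dbdb$ by $d$ yields $acacd=dbdbd$, which reads as $(ac)^2d=(db)^2d$. With both hypotheses of Theorem 3.3 thereby verified, I would invoke that theorem to conclude that $ac\in A$ has generalized Hirano inverse if and only if $bd\in A$ has generalized Hirano inverse.

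There is no substantial obstacle here: the hypothesis $acac=dbdb$ is strictly stronger than the pair of conditions in Theorem 3.3, so this corollary is essentially a specialization obtained by replacing the two weaker cancellative assumptions by one symmetric equality. The argument parallels the structure of Corollary 3.5, in which a single identity ($aba=aca$) was leveraged, together with the choice $d=a$, to match the two-identity hypothesis of Theorem 3.3; the present corollary treats the complementary symmetric case where the two fourth-power expressions themselves agree without any specialization of the letters.
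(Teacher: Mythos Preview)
Your proposal is correct and matches the paper's own proof exactly: the paper simply observes that $(ac)^2a=(db)^2a$ and $(ac)^2d=(db)^2d$ follow from $acac=dbdb$ and then invokes Theorem~3.3. Your added explanation (right-multiplying by $a$ and by $d$) merely spells out what the paper calls ``easy to show.''
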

\begin{enumerate}
\item [(1)]{\it $ac\in A$ has Hirano inverse.}
\vspace{-.5mm}
\item [(2)]{\it $bd\in A$ has Hirano inverse.}
\end{enumerate}
\begin{proof}  It is easy to show that  $(ac)^2a=(db)^2a$ and
$(ac)^2d=(db)^d$. So the result follows from Theorem 3.3. \end{proof}

In particular, $ab\in A$ has generalized Hirano inverse if and only if $ba\in A$ has generalized Hirano inverse. We note that If $aca=dba,
dbd=acd$ then $(ac)^2a=(db)^2a, (ac)^2d=(db)^2d$. But the converse is not true.

\begin{exam} Let $\sigma$ be an operator, acting on separable Hilbert space $l_2({\Bbb N})$, defined by $$\sigma(x_1,x_2,x_3,x_4,\cdots )=(0,x_1,x_2,0,0,\cdots ),$$ and let $A=M_2(L(l_2({\Bbb N})))$. Choose $$a=
\left(
\begin{array}{cc}
0&\sigma\\
0&0
\end{array}
\right), b=\left(
\begin{array}{cc}
1&0\\
0&0
\end{array}
\right), c=\left(
\begin{array}{cc}
1&0\\
1&1
\end{array}
\right), d=a.$$ Then $(ac)^2a=(db)^2a, (ac)^2d=(db)^2d$, but $aca\neq dba$. In this case, $ac\in A$ has generalized Hirano inverse.\end{exam}

\section{Additive property}

Now we are concern on additive property of
generalized Hirano inverses in a Banach algebra $A$. Since every generalized Hirano invertible element in a Banach algebra has g-Drazin inverse,
we now derive

\begin{lem} Let $A$ be a Banach algebra, and let $a,b\in A$ be Hirano polar. If $a^2b=aba$ and $b^2a=bab$, then $ab$ has generalized Hirano inverse.
\end{lem}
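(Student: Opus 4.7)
The plan is to invoke Theorem~2.4 for $ab$: it suffices to show that $ab$ has g-Drazin inverse and that $ab-(ab)^3\in A^{qnil}$. In fact, since $a-a^3\in A^{qnil}$ forces $\sigma(a)\subset\{-1,0,1\}$ via the spectral mapping theorem, the same argument applied to $ab-(ab)^3$ will automatically yield $\sigma(ab)\subset\{-1,0,1\}$, and this finite spectrum already implies g-Drazin invertibility of $ab$. Hence the crux is the quasinilpotent statement $ab-(ab)^3\in A^{qnil}$.

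First I will unwind the hypotheses $a^2b=aba$ and $b^2a=bab$ into working identities. An easy induction on $k$ gives $a^kba^j=a^{k+j}b$ for $k\ge 1,\,j\ge 0$, and in particular $ab\cdot a^j=a^{j+1}b$; analogously $b^na^k=ba^kb^{n-1}$ for $n\ge 1,\,k\ge 0$. A short induction using $ab\cdot a^n=a^{n+1}b$ then yields $(ab)^n=a^nb^n$ for every $n\ge 1$. Combined with $a-a^3,b-b^3\in A^{qnil}$ (from Theorem~2.4 applied to $a$ and $b$), this gives the decomposition
$$ab - (ab)^3 \;=\; ab - a^3b^3 \;=\; (a-a^3)b + a^3(b-b^3) \;=:\; P+Q.$$

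Next I will show $P,Q\in A^{qnil}$ via two applications of Lemma~2.1(2). For $P$, take $x=a-a^3\in A^{qnil}$ and $y=b$; the identities above collapse both $x^2y$ and $xyx$ to $a^2b-2a^4b+a^6b$, and both $y^2x$ and $yxy$ to $bab-ba^3b$, so Lemma~2.1(2) delivers $P=xy\in A^{qnil}$. An entirely parallel argument with $x=a^3$ and $y=b-b^3\in A^{qnil}$ (both $(a^3)^2y$ and $a^3ya^3$ reduce to $a^6(b-b^3)$ using $a^kba^j=a^{k+j}b$ and $(ab)^na^k=a^{n+k}b^n$; both $y^2a^3$ and $ya^3y$ reduce to $ba^3\cdot b(1-b^2)^2$ using $b^na^k=ba^kb^{n-1}$) places $Q$ in $A^{qnil}$. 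Finally, a direct expansion using the same identities gives
$$PQ \;=\; QP \;=\; a^4(1-a^2)\,b^2(1-b^2),$$
so $P$ and $Q$ commute. Commuting quasinilpotents have a quasinilpotent sum --- this is immediate from subadditivity of the spectral radius on commuting pairs, or equivalently from Lemma~2.1(1), whose hypotheses $P^2Q=PQP$ and $Q^2P=QPQ$ are trivial when $PQ=QP$. Therefore $ab-(ab)^3=P+Q\in A^{qnil}$, and Theorem~2.4 finishes the argument.

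The main obstacle is the final commutation $PQ=QP$: without it, Lemma~2.1(1) would have to be verified through the two-sided identities $P^2Q=PQP$ and $Q^2P=QPQ$ directly, a calculation that is decidedly unpleasant. Once the one-sided transport rules $a^kba^j=a^{k+j}b$ and $b^na^k=ba^kb^{n-1}$ have been recorded, however, the commutation $PQ=QP$ reduces to a mechanical, if slightly lengthy, expansion, and everything else in the proof is routine.
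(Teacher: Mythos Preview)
Your argument is correct and follows the same skeleton as the paper's proof: the same splitting $ab-(ab)^3=(a-a^3)b+a^3(b-b^3)$, the same use of Lemma~2.1(2) to place each summand in $A^{qnil}$, and then Lemma~2.1(1) (in your case via commutativity) for the sum. Two points of execution differ. First, the paper verifies the weak commutation identities $P^2Q=PQP$ and $Q^2P=QPQ$ directly, whereas you prove the stronger fact $PQ=QP=a^4(1-a^2)\,b^2(1-b^2)$; your route is cleaner and the commutation you found is a genuine simplification. Second, for g-Drazin invertibility of $ab$ the paper cites an external additive/multiplicative result under the hypotheses $a^2b=aba$, $b^2a=bab$, while you deduce it from $ab-(ab)^3\in A^{qnil}$ via the spectral mapping theorem and Koliha's criterion (finite spectrum $\Rightarrow$ $0$ is isolated in $\sigma(ab)$); this is a nice self-contained shortcut that avoids the citation.
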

\begin{proof} In view of Theorem ??? and ???, $ab\in A$ has g-Drazin inverse. One easily checks that $ab-(ab)^3=ab-(aba)bab=ab-a^2(b^2a)b=ab-a^2(bab)b=ab-a(aba)b^2=ab-a^3b^3=ab-a^3b^3.$ Set $x=(a-a^3)b$ and $y=a^3(b-b^3)$. Then $ab-(ab)^3=x+y$.

Let $c=a-a^3$.$$\begin{array}{lll}
c^2b&=&(a-a^3)^2b\\
&=&(a^2-2a^4+a^6)b\\
&=&a^2b-2a^4b+a^6b\\
&=&(a-a^3)b(a-a^3)\\
&=&cbc.
\end{array}$$ Likewise, we have $b^2c=bcb$. In light of Theorem ???, $a-a^3\in A^{qnil}$. It follows by Lemma ??? that
$x\in A^{qnil}$. Similarly, $y\in A^{qnil}$.

On the other hand, $$\begin{array}{lll}
x^2y&=&(a-a^3)b(a-a^3)ba^3(b-b^3)\\
&=&(a-a^3)b(a-a^3)ba^3(1-b^2)b\\
&=&aba^3-aba^3b^2-a^3ba^3+a^3ba^3b^2\\
&=&a^3ba-a^3ba^3-a^3b^3a+a^3b^3a^3\\
&=&(a-a^3)ba^3(b-b^3)(a-a^3)b\\
&=&xyx.
\end{array}$$ Likewise, $y^2x=yxy$. By using Lemma ??? again, $x+y\in A^{qnil}$. Therefore $ab-(ab)^3=x+y\in A^{qnil}$, by ????. This completes the proof, by ?????.\end{proof}

\begin{lem} Let $A$ be a Banach algebra, let $a,b\in A$ be Hirano polar and $ab=ba$. If $1+a^db$ has generalized Hirano inverse, then $a+b$ has generalized Hirano inverse.
\end{lem}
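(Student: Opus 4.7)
The plan is to use the spectral idempotent $p := a^\pi$ of $a$ to construct directly a tripotent adjacent to $a+b$ and then invoke Theorem 2.7. Theorem 2.4 gives that $a$ has g-Drazin inverse, so $p^2 = p \in comm^2(a)$ with $ap \in A^{qnil}$ and $aa^d = 1 - p$; since $b, a^d \in comm(a)$, the double-commutant property gives $pb = bp$ and $pa^d = a^d p$. I then apply Proposition 2.9 to $a$ and to $b$ to pick tripotents $e_a \in comm^2(a)$ and $e_b \in comm^2(b)$ with $a - e_a, b - e_b \in A^{qnil}$. A direct computation from $a = e_a + (a - e_a)$, together with uniqueness of the spectral idempotent, yields $a^\pi = 1 - e_a^2$; in particular $e_a p = 0$, equivalently $e_a = e_a(1-p)$. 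Thanks to the double-commutant property, all elements introduced pairwise commute.

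The core identity $(a+b)(1-p) = a(1-p)(1 + a^d b)$ (immediate from $aa^d = 1-p$ and commutativity) makes the hypothesis on $1 + a^d b$ usable. By Proposition 2.9 I pick a tripotent $E \in comm^2(1 + a^d b)$ with $X := (1 + a^d b) - E \in A^{qnil}$; double-commutant forces $E$ to commute with $a, b, a^d, p, e_a, e_b$. My candidate is $f := e_a E(1-p) + e_b p$, and the goal is to verify $f$ is a tripotent in $comm(a+b)$ with $(a+b) - f \in A^{qnil}$, so that Theorem 2.7 applies. The identity $f^3 = f$ follows from orthogonality of the two summands ($(1-p)p = 0$) combined with $e_a^3 = e_a$, $e_b^3 = e_b$, $E^3 = E$, and $(1-p), p$ idempotent; and $f \in comm(a+b)$ since each of $e_a, e_b, E, p$ commutes with $a$ and $b$.

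For the quasinilpotency of $(a+b) - f$, I split into Pierce components. The $p$-part is $(a+b)p - e_b p = ap + (b - e_b)p$, a commuting sum of two quasinilpotents, hence quasinilpotent by Lemma 2.1. The $(1-p)$-part is $(a+b)(1-p) - e_a E(1-p)$, which using the core identity and $e_a = e_a(1-p)$ rewrites as $a(1-p)X + (a - e_a)(1-p)E$; each summand is a commuting product one of whose factors is quasinilpotent, hence each is quasinilpotent, and Lemma 2.1 makes their sum quasinilpotent. The two corner pieces of $(a+b) - f$ live in orthogonal Pierce components so commute, and a final application of Lemma 2.1 completes the argument.

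The main obstacle is engineering the $(1-p)$-part of $f$. The naive candidate $a(1-p)E$, arising directly from the approximation $(a+b)(1-p) \approx a(1-p)E$, fails to be a tripotent, since $a(1-p)$ is only invertible in the corner, not itself a tripotent. The fix is to replace $a(1-p)$ by the adjacent tripotent $e_a$ from the Hirano decomposition of $a$, which acts as an involution on the $(1-p)$-corner because $e_a^2 = 1 - p$, and to absorb the discrepancy $(a - e_a)(1-p)E$ into the quasinilpotent remainder. Everything else is routine spectral book-keeping through the Pierce decomposition.
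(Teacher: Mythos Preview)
Your argument is correct, and it takes a genuinely different route from the paper's. The paper works through the characterization of Theorem~2.4: it first appeals to an external additive result to get that $a+b$ has g-Drazin inverse, and then shows $(a+b)-(a+b)^3\in A^{qnil}$ by expanding $(1+a^db)-(1+a^db)^3$, extracting that $3a^db(1+a^db)\in A^{qnil}$ (using Lemma~4.1 to know $a^db-(a^db)^3\in A^{qnil}$), and rewriting $3ab(a+b)=3(a-a^2a^d)b(a+b)+3a^3a^db(1+a^db)$ as a commuting sum of quasinilpotents; this feeds into $(a+b)-(a+b)^3=(a-a^3)+(b-b^3)-3ab(a+b)$.

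You instead work through the tripotent characterization (Theorem~2.7/Proposition~2.9), building the tripotent $f=e_aE(1-p)+e_bp$ explicitly from the Pierce decomposition relative to $p=a^{\pi}$ and the tripotents $e_a,e_b,E$ attached to $a,b,1+a^db$. The key identifications $e_a^2=1-p$ (whence $e_ap=0$) and $(a+b)(1-p)=a(1-p)(1+a^db)$ are exactly what make the $(1-p)$-component work, and your replacement of $a(1-p)$ by $e_a$ with the discrepancy absorbed into the quasinilpotent remainder is the right move. The double-commutant bookkeeping is sound: each of $a,b,a^d,p,e_a,e_b$ lies in $comm(1+a^db)$, so $E$ commutes with all of them, and all products and sums you form are among pairwise commuting elements, so every invocation of Lemma~2.1 is in the trivial commuting case. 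Your approach has the advantage of being self-contained (no external g-Drazin additive result, no Lemma~4.1) and of producing the tripotent for $a+b$ explicitly; the paper's approach is shorter on the page and avoids the double-commutant tracking, at the cost of importing more machinery.
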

\begin{proof} In view of ???, $a+b\in A$ has g-Drazin inverse.
Since $1+a^db$ has generalized Hirano inverse, we see that $1+a^db-(1+a^db)^3=a^db-(a^db)^3-3a^db(1+a^db)\in A^{qnil}$. In light of Lemma ???, $a^db\in A$ has generalized Hirano inverse, and so
$a^db-(a^db)^3\in A^{qnil}$. In view of Lemma ???, we see that $3a^db(1+a^db)\in A^{qnil}$, and so
$$\begin{array}{lll}
3ab(a+b)&=&3(a-a^2a^d)b(a+b)+3a^2b(aa^d+a^db)\\
&=&3(a-a^2a^d)b(a+b)+3a^2b(aa^d+a(a^d)^2b)\\
&=&3(a-a^2a^d)b(a+b)+3a^3a^db(1+a^db)\\
&\in &A^{qnil}.
\end{array}$$
Consequently, $(a+b)-(a+b)^3=(a-a^3)+(b-b^3)-3ab(a+b)\in A^{qnil}$.
Accordingly, $a+b$ has generalized Hirano inverse, by Theorem ????.\end{proof}

Let $p\in A$ be an idempotent, and
let $x\in A$. Then we write $$x=pxp+px(1-p)+(1-p)xp+(1-p)x(1-p),$$
and induce a representation given by the matrix
$$x=\left(\begin{array}{cc}
pxp&px(1-p)\\
(1-p)xp&(1-p)x(1-p)
\end{array}
\right)_p,$$ and so we may regard such matrix as an element in
$A$. For any idempotent $e$ in $A$, $(eAe)^{qnil}\subseteq A^{qnil}$.

We now ready to prove the following.

\begin{thm} Let $A$ be a Banach algebra, and let $a,b\in A$ be Hirano polar. If $a^2b=aba$ and $b^2a=bab$, then $a+b$ has generalized Hirano inverse if and only if $1+a^db$ has generalized Hirano inverse.
\end{thm}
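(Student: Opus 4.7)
The plan is to reduce both directions of the equivalence to Theorem 2.4, which characterizes generalized Hirano invertibility of an element $x$ as the conjunction of g-Drazin invertibility of $x$ and $x - x^3 \in A^{qnil}$. Hence it suffices to establish, in each direction, the g-Drazin invertibility of the target element plus the quasinilpotency of its $x - x^3$.

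For the g-Drazin half, the existence of $(a+b)^d$ follows from the known additive result for g-Drazin inverses under the identities $a^2b=aba$ and $b^2a=bab$ (the result invoked but unlabelled in the proof of Lemma 4.2); the existence of the g-Drazin inverse of $1+a^d b$ is obtained in parallel. The real work is the quasinilpotency matching. Using $(1+x)-(1+x)^3 = -x(1+x)(2+x)$ with $x = a^d b$, the condition on $1 + a^d b$ becomes quasinilpotency of $-a^d b(1 + a^d b)(2 + a^d b)$. On the other side, one expands $(a+b) - (a+b)^3$ using the hypotheses $a^2b=aba$ and $b^2a=bab$; since $a-a^3,b-b^3\in A^{qnil}$ by Hirano polarity and Theorem 2.4, what remains is a cross-term which I would split, in the manner of Lemma 4.2, as $3(a-a^2a^d)b(a+b) + 3a^2 b(aa^d + a^d b)$. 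The first piece contains the quasinilpotent factor $a-a^2 a^d = a a^\pi$ and lies in $A^{qnil}$ after one application of Lemma 2.1; the second, using the commutation $a^d\cdot ab = ab \cdot a^d$ (which follows from $a^d\in \mathrm{comm}^2(a)$ together with the hypothesis $a\cdot ab = ab\cdot a$), matches $3a^3 a^d b(1 + a^d b)$ modulo $A^{qnil}$, tying the two quasinilpotency conditions together.

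The chief obstacle is the loss of full commutativity $ab=ba$ that made Lemma 4.2 straightforward: every rearrangement of products in $(a+b)^3$ and in the cross-term must now be justified through the weaker identities $a\cdot ab = ab\cdot a$, $b\cdot ba = ba\cdot b$ and their consequences, and every invocation of the closure of $A^{qnil}$ under addition must go through Lemma 2.1, whose hypothesis $x^2 y = xyx$, $y^2 x = yxy$ has to be verified each time for the specific pair of quasinilpotents at hand. I expect the hardest bookkeeping to arise in reducing $a^2 b(aa^d + a^d b)$ to $a^3 a^d b(1 + a^d b)$ modulo $A^{qnil}$ without the luxury of freely commuting $a^d$ past $b$; once this reduction is in place, Theorem 2.4 delivers both directions in parallel.
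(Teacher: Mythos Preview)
Your plan tries to push the computation of Lemma~4.2 through under the weaker hypotheses $a^{2}b=aba$, $b^{2}a=bab$, but that computation genuinely needs $ab=ba$. Expanding with only the given identities yields
\[
(a+b)^{3}=a^{3}+2a^{2}b+ab^{2}+ba^{2}+2b^{2}a+b^{3},
\]
so the cross-term in $(a+b)-(a+b)^{3}$ is $2a^{2}b+ab^{2}+ba^{2}+2b^{2}a$, not $3ab(a+b)=3a^{2}b+3ab^{2}$. The discrepancy $a^{2}b+2ab^{2}-ba^{2}-2b^{2}a$ has no reason to be quasinilpotent, and none of the hypotheses let you rewrite $ba^{2}$ or $b^{2}a$ in terms of $a^{2}b$, $ab^{2}$. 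Consequently the splitting ``$3(a-a^{2}a^{d})b(a+b)+3a^{3}a^{d}b(1+a^{d}b)$'' that you borrow from Lemma~4.2 is simply not equal to the actual cross-term here, and the bridge between quasinilpotency of $(a+b)-(a+b)^{3}$ and of $a^{d}b(1+a^{d}b)(2+a^{d}b)$ collapses. You anticipate that ``the hardest bookkeeping'' lies in this reduction, but in fact it cannot be carried out at all by rearrangement; this is a missing idea, not missing bookkeeping.

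The paper avoids this obstruction by a structural argument rather than a direct expansion. For $\Longrightarrow$ it writes $1+a^{d}b=(1-aa^{d})+a^{d}(a+b)$; the first summand is idempotent, the product $xy$ vanishes, and $a^{d}(a+b)$ is shown Hirano polar via Lemma~4.1 (the product result) applied to the pair $a^{d},\,a+b$, which does satisfy the required quadratic commutation identities. For $\Longleftarrow$ it uses Pierce decompositions with respect to the spectral idempotents $aa^{d}$ and $bb^{d}$: on the ``invertible'' corner the problem reduces to the genuinely commuting pair $a^{2}a^{d}$ and $aa^{d}b$, where Lemma~4.2 applies; on the ``quasinilpotent'' corner one shows $(1-aa^{d})(a+b)\in A^{qnil}$ via Lemma~2.1 and invokes the triangular-matrix Lemma~\ref{lem5.1}. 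The key insight you are missing is this reduction-to-corners, which manufactures commutativity where it is needed instead of trying to force the Lemma~4.2 calculation globally.
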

\begin{proof} $\Longrightarrow$ Write
$1+a^db=x+y$ where $x=(1-aa^d)$ and $y=a^d(a+b)$. Then $x^2=x\in A$ has generalized Hirano inverse and $xy=0$.
One easily checks that $(a^d)^2(a+b)=a^d(a+b)a^d$,
$(a+b)^2a^d=(a+b)a^d(a+b)$. In light of Lemma ???, $a^d(a+b)\in A$ has generalized Hirano inverse. Therefore
$1+a^db=x+y\in A$ is Hirao polar, by Lemma ?????.

$\Longleftarrow$ Step 1. We check that $1+(a^2a^d)^d(aa^d)b=1+(aaa^d)^d(aa^db)=1+a^daa^daa^db=1+a(a^d)^2b=1+a^db\in A$ has generalized Hirano inverse.
Since $(a^2a^d)(aa^db)=(aa^db)(a^2a^d)$, it follows by Lemma ??? that
$a^2a^d+aa^db=aa^d(a+b)\in A$ has generalized Hirano inverse.

Step 2. $b\in A^{qnil}$. Then $(1-aa^d)(a+b)=x+y$ where
$x=(a-a^2a^d)$ and $y=(1-aa^d)b$. Then $x^2y=xyx$ and $y^2x=yxy$.
Clearly, $x=a-a^2a^d, y=(1-aa^d)b\in A^{qnil}$. Then $(1-aa^d)(a+b)=x+y\in A^{qnil}$. Choose
$p=aa^d$. Then $$a+b=\left(
\begin{array}{cc}
p(a+b)p&0\\
(1-p)(a+b)p&(1-p)(a+b)(1-p)\end{array} \right)_p.$$ Since $a^d\in comm(ab)$, we see that $(1-p)(a+b)(1-p)=(1-p)(a+b)\in A^{qnil}$.
Clearly, $(pa)(pb)=(pb)(pa)$ and $(pb)^d=0$. It follows by Lemma ??? that $p(a+b)p=p(a+b)=pa+pb\in A$ has generalized Hirano inverse.
In light of Lemma ???, $a+b\in A$ has generalized Hirano inverse.

Step 3. Choose $p=bb^d$. Then $a=\left(
\begin{array}{cc}
a_1&0\\
*&a_2\end{array} \right)_p$ and $b=\left(
\begin{array}{cc}
b_1&0\\
*&b_2\end{array} \right)_p$ where $a_1=pap$, $a_2=(1-p)a(1-p)$,
$b_1=pbp$ and $b_2=(1-p)b(1-p)$. Hence, $$a+b=\left(
\begin{array}{cc}
a_1+b_1&0\\
*&a_2+b_2\end{array} \right)_p.$$ Thus, $a_1,b_1 \in A$ has generalized Hirano inverse,
$a_1^2b_1=a_1b_1a_1$, $b_1^2a_1=b_1a_1b_1$, $a_1b_1=b_1a_1$ and
$1+a_1^d b_1 \in A$ has generalized Hirano inverse. Therefore, $a_1+b_1\in A$ has generalized Hirano inverse by Lemma
?????. Moreover, $a_2 \in A$ has generalized Hirano inverse, $b_2 \in A^{qnil}$,
$a_2^2b_2=a_2b_2a_2$ and $b_2^2a_2=b_2a_2b_2$. By Step 2, $a_2+b_2
\in A$ has generalized Hirano inverse. Therefore $a+b\in A$ has generalized Hirano inverse by Lemma ???.
\end{proof}

\begin{cor} Let $A$ be a Banach algebra, and let $a,b\in A$ be Hirano polar. If $ab=ba$, then $a+b$ has generalized Hirano inverse if and only if $1+a^db$ has generalized Hirano inverse.
\end{cor}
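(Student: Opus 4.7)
The strategy is to show that Corollary 4.4 reduces immediately to Theorem 4.3 by checking that the stronger hypothesis $ab=ba$ forces both of the weaker identities $a^2b=aba$ and $b^2a=bab$ required there. Nothing genuinely new needs to be proved; one only has to verify two one-line calculations and then invoke the theorem.

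First I would verify the two identities: using only associativity of multiplication and the commutation relation $ab=ba$, we compute $a^2b = a(ab) = a(ba) = (ab)a = aba$, and symmetrically $b^2a = b(ba) = b(ab) = (ba)b = bab$. Thus both of the hypotheses needed for Theorem 4.3 are automatic consequences of $ab=ba$.

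Having secured these identities, Theorem 4.3 applies verbatim, yielding the desired equivalence that $a+b$ has generalized Hirano inverse if and only if $1+a^db$ has generalized Hirano inverse. There is no real obstacle to overcome: the purpose of the corollary is simply to record the commutative special case of the theorem, which is the instance most frequently encountered in applications. One might additionally remark, although it is not logically required for the proof, that when $ab=ba$ we have $a^d \in comm^2(a)$ and hence $a^db=ba^d$, so the element $1+a^db$ behaves symmetrically in the two variables; this is consistent with the way the equivalence is used in practice.
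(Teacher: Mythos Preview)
Your proposal is correct and matches the paper's own argument exactly: the corollary is deduced immediately from Theorem 4.3 once one observes that $ab=ba$ implies $a^2b=aba$ and $b^2a=bab$. The paper's proof is simply the sentence ``This is obvious, by Theorem 4.3,'' so your write-up is, if anything, slightly more explicit.
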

\begin{proof} This is obvious, by Theorem ???.\end{proof}

For further use, we now record the following.

\begin{prop} Let $A$ be a Banach algebra, and let $a,b\in A$. If $a^2+ab, b^2+ab$ have gs-Drazin inverses and $a^2b+ab^2=0$, then $a+b$ has generalized Hirano inverse.\end{prop}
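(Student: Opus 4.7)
The plan is to reduce the problem to showing that $(a+b)^2$ has a gs-Drazin inverse, since by Lemma 2.3 this is equivalent to $a+b$ having a generalized Hirano inverse. Set $c=a+b$, $u=ac=a^2+ab$, and $v=bc$. Then $u+v=(a+b)c=c^2$, while the hypothesis $a^2b+ab^2=0$ reads $acb=0$, giving
\begin{equation*}
uv=(ac)(bc)=a(cb)c=a(ab+b^2)c=(a^2b+ab^2)c=0.
\end{equation*}

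First I would promote $v=bc$ to a gs-Drazin element. By hypothesis $cb=b^2+ab$ has a gs-Drazin inverse. Cline's formula for the g-Drazin inverse (\cite{L}) transfers g-Drazin invertibility from $cb$ to $bc$. For the quasinilpotent defect, write $cb-(cb)^2=c(1-bc)b$ and $bc-(bc)^2=(1-bc)bc$; these are $xy$ and $yx$ for $x=c$ and $y=(1-bc)b$, so the standard equivalence $xy\in A^{qnil}\Leftrightarrow yx\in A^{qnil}$ yields $bc-(bc)^2\in A^{qnil}$. Hence $v$ has a gs-Drazin inverse by Lemma 2.2.

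The heart of the argument is the identity
\begin{equation*}
c^2-c^4=(u+v)-(u+v)^2=(u-u^2)+(v-v^2)-vu,
\end{equation*}
which uses $uv=0$. Each summand lies in $A^{qnil}$: $u-u^2$ and $v-v^2$ by Lemma 2.2, while $-vu$ is nilpotent since $(vu)^2=v(uv)u=0$. The relation $uv=0$ also produces the annihilations
\begin{equation*}
(u-u^2)v=uv-u(uv)=0,\qquad (vu)v=v(uv)=0,
\end{equation*}
so $(u-u^2)[(v-v^2)-vu]=0$ and $(-vu)(v-v^2)=0$. I would then invoke the auxiliary fact \emph{if $\alpha,\beta\in A^{qnil}$ and $\alpha\beta=0$, then $\alpha+\beta\in A^{qnil}$}: the zero-product condition forces $(\alpha+\beta)^n=\sum_{k=0}^{n}\beta^k\alpha^{n-k}$, and for any $\epsilon>0$ the bounds $\|\alpha^m\|\leq C\epsilon^m$ and $\|\beta^m\|\leq C\epsilon^m$ (valid for some $C$ by quasinilpotency) give $\|(\alpha+\beta)^n\|\leq(n+1)C^2\epsilon^n$, hence spectral radius zero. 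Applying this fact first to the pair $(-vu,\,v-v^2)$ and then to $(u-u^2,\,(v-v^2)-vu)$ shows $c^2-c^4\in A^{qnil}$.

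To finish, $c^2=u+v$ has a g-Drazin inverse by the standard additive formula for the g-Drazin inverse in a Banach algebra under a one-sided zero product. Combined with $c^2-c^4\in A^{qnil}$, Lemma 2.2 gives that $c^2$ has a gs-Drazin inverse, and Lemma 2.3 then yields that $a+b$ has a generalized Hirano inverse. The main obstacles are the auxiliary fact on sums of zero-product quasinilpotents used in Step 3 and the importation of the zero-product additive result for the g-Drazin inverse; everything else is a bookkeeping consequence of recognizing $uv=0$ from the hypothesis $a^2b+ab^2=0$.
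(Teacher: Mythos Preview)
Your argument is correct. The key identification $uv=0$ with $u=ac$, $v=bc$ and the two-step application of the ``$\alpha\beta=0$'' quasinilpotent sum lemma are sound, and the transfer of the gs-Drazin property from $cb=b^2+ab$ to $bc$ via Cline together with the $xy\leftrightarrow yx$ quasinilpotent equivalence is a nice touch.

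The paper proceeds differently. It sets
\[
M=\begin{pmatrix}a\\ 1\end{pmatrix}\begin{pmatrix}1&b\end{pmatrix}\in M_2(A),
\]
so that $(1\ b)\binom{a}{1}=a+b$, and computes
\[
M^2=\begin{pmatrix}a^2+ab&0\\ a+b&b^2+ab\end{pmatrix}
\]
(the hypothesis $a^2b+ab^2=0$ kills the upper-right entry). Triangularity with gs-Drazin invertible diagonal then gives $M^2-M^4\in M_2(A)^{qnil}$ at once, hence $M$ has generalized Hirano inverse via Lemma~2.3, and Cline's formula for generalized Hirano inverses (Section~3) pulls this back to $a+b$. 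In effect, the paper hides your decomposition $c^2=u+v$, $uv=0$, inside the lower-triangular structure of $M^2$: the spectral argument for triangular matrices replaces your explicit ``$\alpha\beta=0\Rightarrow\alpha+\beta\in A^{qnil}$'' computation, and Cline's formula replaces your separate verification that $bc$ inherits the gs-Drazin property from $cb$. Your route is more elementary and stays inside $A$; the paper's is shorter once the matrix factorisation is spotted and leans on the Section~3 machinery it has just developed.
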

\begin{proof} Let $$M=\left(
\begin{array}{c}
a\\
1
\end{array}
\right)\left(
\begin{array}{cc}
1&b
\end{array}
\right).$$ Then $$M^2=\left(
\begin{array}{cc}
a^2+ab&0\\
a+b&b^2+ab
\end{array}
\right).$$ Since $a^2+ab, b^2+ab$ have gs-Drazin inverses, we see that $$\begin{array}{lll}
M^2-M^4&=&\left(
\begin{array}{cc}
(a^2+ab)-(a^2+ab)^2&0\\
*&(b^2+ab)-(b^2+ab)^2
\end{array}
\right)\\
&\in& M_2(A)^{qnil}.
\end{array}$$ Thus, $M-M^3\in M_2(A)^{qnil}.$ In light of ????, $$a+b=\left(
\begin{array}{cc}
1&b
\end{array}
\right)\left(
\begin{array}{c}
a\\
1
\end{array}
\right)$$ has generalized Hirano inverse, as asserted.\end{proof}

\begin{cor} Let $A$ be a Banach algebra, and let $a,b\in A$. If $a,b$ are Hirano polar and $ab=0$, then $a+b$ has generalized Hirano inverse.\end{cor}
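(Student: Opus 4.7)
The plan is to deduce this corollary directly from Proposition 4.5, which requires two gs-Drazin inverse hypotheses and the identity $a^2b+ab^2=0$. Since we are handed the very strong condition $ab=0$, each of these will collapse almost immediately.

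First I would substitute $ab=0$ into the auxiliary expressions appearing in Proposition 4.5. We get $a^2b = a(ab) = 0$ and $ab^2 = (ab)b = 0$, so $a^2b + ab^2 = 0$, which is exactly the algebraic relation demanded by the proposition. Moreover, $a^2+ab = a^2$ and $b^2+ab = b^2$, so the two gs-Drazin hypotheses reduce to: $a^2$ has gs-Drazin inverse and $b^2$ has gs-Drazin inverse.

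Next I would invoke Lemma 2.3, which says that $a \in A$ has generalized Hirano inverse if and only if $a^2 \in A$ has gs-Drazin inverse. Applying this to $a$ and to $b$ (both of which are assumed Hirano polar, i.e.\ generalized Hirano invertible), we obtain that $a^2$ and $b^2$ both have gs-Drazin inverses. Combined with $a^2b+ab^2 = 0$, the hypotheses of Proposition 4.5 are met, so $a+b$ has generalized Hirano inverse, as claimed.

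There is essentially no obstacle here; the corollary is a packaged special case of Proposition 4.5 once one notices that $ab=0$ annihilates both the cross terms $ab$ (inside $a^2+ab$ and $b^2+ab$) and the product $a^2b+ab^2$. The only point worth checking carefully is that the term \emph{Hirano polar} used in the statement is indeed synonymous with "has generalized Hirano inverse," so that Lemma 2.3 is legitimately applicable; this is the terminology fixed earlier in the paper.
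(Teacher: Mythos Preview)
Your proof is correct and follows essentially the same route as the paper: the paper's argument is simply that Hirano polarity of $a,b$ forces $a^2,b^2$ to have gs-Drazin inverses (Lemma 2.3), and then $ab=0$ feeds directly into Proposition 4.5. You have just spelled out explicitly the verification that $a^2+ab=a^2$, $b^2+ab=b^2$, and $a^2b+ab^2=0$, which the paper leaves implicit.
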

\begin{proof} Since $a,b$ are Hirano polar, it follows by ??? that $a^2,b^2\in A$ have gs-Drazin inverses. This completes the proof by Theorem ????.\end{proof}

\section{Splitting approach}

We are now concerned on the generalized Hirano inverse for a operator matrix
$M$. Here, \begin{equation} M=\left(
\begin{array}{cc}
A&B\\
C&D
\end{array}
\right)\end{equation}
 where $A,D\in L(X)$
has generalized Hirano inverses and $X$ is a complex Banach space. Then $M$
is a bounded linear operator on $X\oplus X$. Using different splitting of the operator matrix $M$ as $P+Q$, we will apply preceding results to obtain various conditions
for the generalized Hirano inverse of $M$.

\begin{lem} \label{lem5.1} Let $A,D\in L(X)$ have generalized Hirano inverses and $B\in L(X)$. Then $\left(
\begin{array}{cc}
A&B\\
0&D
\end{array}
\right)\in M_2(L(X))$ has generalized Hirano inverse.\end{lem}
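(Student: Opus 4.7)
My strategy is to invoke Theorem~2.4 in the Banach algebra $M_2(L(X))$: it is enough to show that $M$ admits a g-Drazin inverse and that $M - M^3 \in M_2(L(X))^{qnil}$.

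\emph{Reduction.} Since $A$ and $D$ have generalized Hirano inverses, Theorem~2.4 applied separately to each of them yields that $A^d$ and $D^d$ exist and that $A - A^3,\, D - D^3 \in L(X)^{qnil}$. Hence the problem reduces to verifying the two Theorem~2.4 conditions for $M$.

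\emph{Existence of $M^d$.} This is the well-known fact that an upper triangular operator matrix whose diagonal blocks have g-Drazin inverses itself admits a g-Drazin inverse, of the block form
\[ M^d = \begin{pmatrix} A^d & Z \\ 0 & D^d \end{pmatrix}, \]
where $Z$ is built from $A^d, D^d, B$ and the spectral idempotents $A^\pi, D^\pi$ via a pair of series that converge in norm thanks to the quasinilpotency of $A A^\pi$ and $D D^\pi$. I would cite (or briefly reproduce) this standard Djordjevi\'c--Wei-type construction.

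\emph{$M - M^3$ is quasinilpotent.} A direct block computation yields
\[ M - M^3 = \begin{pmatrix} A - A^3 & B - A^2 B - A B D - B D^2 \\ 0 & D - D^3 \end{pmatrix}, \]
an upper triangular operator matrix with quasinilpotent diagonal blocks. For any bounded upper triangular operator matrix
\[ T = \begin{pmatrix} T_1 & S \\ 0 & T_2 \end{pmatrix} \]
on $X \oplus X$ and any $\lambda \notin \sigma(T_1) \cup \sigma(T_2)$, the explicit resolvent
\[ (T-\lambda)^{-1} = \begin{pmatrix} (T_1-\lambda)^{-1} & -(T_1-\lambda)^{-1} S (T_2-\lambda)^{-1} \\ 0 & (T_2-\lambda)^{-1} \end{pmatrix} \]
is bounded, hence $\sigma(T) \subseteq \sigma(T_1) \cup \sigma(T_2)$. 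In particular $T$ is quasinilpotent whenever both $T_1, T_2$ are; applied to $M - M^3$ this gives $M - M^3 \in M_2(L(X))^{qnil}$.

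\emph{Conclusion and main obstacle.} Theorem~2.4 now yields that $M$ has generalized Hirano inverse. The principal obstacle is the g-Drazin step: the naive splitting of $M$ into its block-diagonal part $\mathrm{diag}(A,D)$ and the strictly upper triangular part with $B$ in the $(1,2)$ slot does not commute, and these two summands satisfy neither the commutation hypothesis of Corollary~4.4 nor the annihilation hypothesis $ab = 0$ of Corollary~4.6 in general, so the additive machinery of Section~4 is not directly applicable. One therefore must rely on the classical upper triangular g-Drazin formula (or redo its series construction by hand); once that is available, the spectral argument for $M - M^3$ is immediate and the proof is complete.
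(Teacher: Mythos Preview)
Your proposal is correct and follows essentially the same route as the paper: invoke Theorem~2.4, obtain $M^d$ from the standard upper triangular g-Drazin formula (which the paper merely cites), and show $M-M^3$ is quasinilpotent because it is upper triangular with quasinilpotent diagonal blocks. The only difference is that you supply the explicit resolvent argument for the quasinilpotency step, whereas the paper just appeals to a reference.
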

\begin{proof} In view of Theorem ???, $A,D\in L(X)$ have g-Drazin inverse and $A-A^3,D-D^3\in A^{qnil}$. In view of ????, $\left(
\begin{array}{cc}
A&B\\
0&D
\end{array}
\right)\in M_2(L(X))$ has g-Hirano inverse. Moreover, it follows by ??? that
$$\left(
\begin{array}{cc}
A&B\\
0&D
\end{array}
\right)-\left(
\begin{array}{cc}
A&B\\
0&D
\end{array}
\right)^3=\left(
\begin{array}{cc}
A-A^3&*\\
0&D-D^3
\end{array}
\right)\in M_2(L(X))^{qnil}.$$  According to Theorem ???, we obtain the result.\end{proof}

\begin{lem} Let $A$ be a Banach algebra, and let $a\in A$ have generalized Hirano inverse. If $e^2=e\in comm(a)$, then $ea\in A$ has generalized Hirano inverse.
\end{lem}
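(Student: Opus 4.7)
The plan is to apply the characterization of generalized Hirano inverses via tripotents from Proposition 2.9, producing an appropriate tripotent for $ea$ out of the one for $a$.

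First, I would invoke Proposition 2.9 applied to $a$ to obtain a tripotent $p\in comm^{2}(a)$ (i.e., $p^{3}=p$) such that $a-p\in A^{qnil}$. The reason for using Proposition 2.9 rather than Theorem 2.7 is crucial: I need $p$ to commute with every element commuting with $a$, so that in particular $ep=pe$ (since $e\in comm(a)$ by hypothesis). This is the point where the strengthened characterization pays off.

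Next, I would propose $q:=ep$ as the candidate tripotent for $ea$. The routine checks are:
\begin{enumerate}
\item[(i)] $q^{3}=e^{3}p^{3}=ep=q$, using $e^{2}=e$ (hence $e^{3}=e$) together with $p^{3}=p$ and the commutation $ep=pe$;
\item[(ii)] $q(ea)=(ea)q$, which follows because $e$ commutes with $a$, $p$ commutes with $a$, and $e$ commutes with $p$.
\end{enumerate}

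The remaining, and main, task is to show that $ea-q=e(a-p)\in A^{qnil}$. Since $e$ and $a-p$ commute (as $e$ commutes with both $a$ and $p$), the products $e$ and $a-p$ satisfy the identities $e^{2}(a-p)=e(a-p)e$ and $(a-p)^{2}e=(a-p)e(a-p)$ automatically. Because $a-p\in A^{qnil}$, Lemma 2.1(2) applies and yields $e(a-p)\in A^{qnil}$. Combining (i), (ii) and this quasinilpotence, Theorem 2.7 (the tripotent characterization) gives that $ea$ has generalized Hirano inverse.

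The only subtle point is the commutation $ep=pe$; without the double-commutant version of the tripotent characterization this would fail, since the hypothesis only provides $e\in comm(a)$. Otherwise the argument reduces to Lemma 2.1(2) and bookkeeping with $e^{2}=e$ and $p^{3}=p$.
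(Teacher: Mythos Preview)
Your argument is correct, but the paper takes a different route. Instead of the tripotent characterization, the paper applies Theorem~2.4: since $a$ has generalized Hirano inverse, $a$ has g-Drazin inverse and $a-a^{3}\in A^{qnil}$. From an external fact about g-Drazin inverses (an idempotent commuting with a g-Drazin invertible element yields a g-Drazin invertible product), $ea$ has g-Drazin inverse; and since $e^{3}=e$ and $ea=ae$, one computes $ea-(ea)^{3}=e(a-a^{3})$, which lies in $A^{qnil}$ by Lemma~2.1(2). Theorem~2.4 then gives the conclusion.

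Your approach via Proposition~2.9 has the advantage of being entirely self-contained within the paper: you never need to invoke the outside fact that $ea$ is g-Drazin invertible, because the tripotent $q=ep$ and the quasinilpotence of $e(a-p)$ already suffice for Theorem~2.7. The paper's approach is marginally shorter once that g-Drazin fact is granted, and parallels the style of the surrounding section, which leans on Theorem~2.4 throughout. Your observation that the double-commutant version (Proposition~2.9) is essential for securing $ep=pe$ is exactly right and is the key insight making your route work.
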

\begin{proof} In light of ???, $ea\in A$ has g-Drazin inverse. Moreover, $a-a^3\in A^{qnil}$, and so $ea-(ea)^3=e(a-a^3)\in A^{qnil}$, by Lemma ???. This completes the proof.\end{proof}

\begin{thm} Let $A,D\in L(X)$ have generalized Hirano inverse and $M$ be given by $(5.1)$. If $BC=CB=0$, $CA(I-A^{\pi})=D^{\pi}DC$ and $A^{\pi}AB=BD(I-D^{\pi})$, then $M\in M_2(L(X))$ has generalized Hirano inverse.
\end{thm}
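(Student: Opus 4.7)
The plan is to split $M$ via the spectral idempotents of $A$ and $D$ and invoke the additive theorem of Section~4.

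Write $E = A^{\pi}$ and $F = D^{\pi}$. Since $A$ and $D$ have generalized Hirano inverse, Theorem~2.4 gives g-Drazin inverses for $A$ and $D$; hence $E \in \mathrm{comm}(A)$ and $F \in \mathrm{comm}(D)$ are well defined, with $AE, DF \in L(X)^{qnil}$, and $A(I-E), D(I-F)$ have trivial spectral idempotent, so each still has generalized Hirano inverse.

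My first move is to split $M = P + Q$ with
\[
P \;=\; \begin{pmatrix} A(I-E) & A^{\pi}AB \\ CA(I-E) & D(I-F) \end{pmatrix}, \qquad Q \;=\; M - P.
\]
The hypotheses $A^{\pi}AB = BD(I-F)$ and $CA(I-E) = FDC$ are exactly what align the off-diagonal entries of $P$ simultaneously with the ``invertible part'' of $A$ on one side and the ``quasinilpotent part'' of $D$ on the other (and dually for $C$). Three things then remain to verify. First, $P$ has generalized Hirano inverse: after conjugation by the block idempotent $\mathrm{diag}(I-E, I-F) \in M_2(L(X))$, $P$ becomes block upper-triangular with diagonal entries $A(I-E)$ and $D(I-F)$, and Lemma~5.1 applies. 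Second, $Q \in M_2(L(X))^{qnil}$: the diagonal blocks $AE$ and $DF$ are quasinilpotent, $BC = CB = 0$ kills the dangerous cross-terms in expansions of $Q^n$, and the remaining off-diagonal contributions are products containing a factor of $AE$ or $DF$, which stay in $L(X)^{qnil}$ by Lemma~2.1. Third, $P^2 Q = PQP$ and $Q^2 P = QPQ$, which are the precise hypotheses of Theorem~4.3 for $a = P$, $b = Q$.

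Once these three points are in hand, Theorem~4.3 applied to $a = P$ (generalized Hirano invertible) and $b = Q$ (quasinilpotent, hence also generalized Hirano invertible with $Q^d = 0$) reduces the problem to showing that $1 + P^d Q$ has generalized Hirano inverse. But $P^d Q$ is of the form ``quasinilpotent'' (it inherits $Q$'s quasinilpotency through the quasi-commutation with $P$), so $1 + P^d Q$ is $1$ plus a quasinilpotent element and trivially has generalized Hirano inverse by Theorem~2.7. Hence $M = P+Q$ has generalized Hirano inverse.

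The main obstacle is the verification of $P^2 Q = PQP$ and $Q^2 P = QPQ$: $P$ and $Q$ do not commute, and these two relations must be expanded block by block. The three hypotheses $BC = CB = 0$, $CA(I-E) = FDC$, and $A^{\pi}AB = BD(I-F)$ are calibrated exactly so that every cross-term in the sixteen resulting block expressions cancels, but the bookkeeping is intricate and carries the essential combinatorial content of the theorem.
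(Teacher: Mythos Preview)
Your splitting does not work as written. The step ``after conjugation by the block idempotent $\mathrm{diag}(I-E,I-F)$, $P$ becomes block upper-triangular'' is not a legitimate operation: $\mathrm{diag}(AA^d,DD^d)$ is an idempotent, not a unit, so there is no conjugation available, and nothing in your argument establishes that your $P$ has generalized Hirano inverse. More seriously, your $Q=M-P$ is not shown to be quasinilpotent, and your stated reason is false. Using the hypotheses one finds
\[
Q=\begin{pmatrix} AA^{\pi} & (I-AA^{\pi})B\\ (I-DD^{\pi})C & DD^{\pi}\end{pmatrix},
\]
so the off-diagonal entries contain no factor of $AA^{\pi}$ or $DD^{\pi}$ at all (the factors $I-AA^{\pi}$ and $I-DD^{\pi}$ are invertible). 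The $(1,1)$-entry of $Q^{2}$ then equals $(AA^{\pi})^{2}-(I-AA^{\pi})BDC$, and no hypothesis controls $BDC$. The remaining claims --- the relations $P^{2}Q=PQP$, $Q^{2}P=QPQ$, and the quasinilpotency of $P^{d}Q$ --- are likewise asserted but never verified.

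The paper avoids all of this with a simpler splitting in which both summands are already triangular:
\[
P=\begin{pmatrix} A(I-A^{\pi}) & B\\ 0 & DD^{\pi}\end{pmatrix},\qquad
Q=\begin{pmatrix} AA^{\pi} & 0\\ C & D(I-D^{\pi})\end{pmatrix}.
\]
Lemma~\ref{lem5.1} gives generalized Hirano inverses for $P$ and $Q$ directly. A two-line block computation then shows that the three hypotheses $BC=CB=0$, $CA(I-A^{\pi})=D^{\pi}DC$, $A^{\pi}AB=BD(I-D^{\pi})$ are \emph{exactly} the equalities $PQ=QP$, so one is in the genuinely commuting situation of Corollary~4.4 rather than the quasi-commuting setting of Theorem~4.3. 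Finally $P^{d}Q$ is computed explicitly and turns out to be strictly upper triangular, hence nilpotent, so $I_{2}+P^{d}Q$ trivially has generalized Hirano inverse. The key point you missed is that the hypotheses are tailored to force honest commutativity of the two pieces, which makes the additive step essentially automatic.
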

\begin{proof} Let $$P=\left(
\begin{array}{cc}
A(1-A^{\pi})&B\\
0&DD^{\pi}
\end{array}
\right),~Q=\left(
\begin{array}{cc}
AA^{\pi}&0\\
C&D(I-D^{\pi})
\end{array}
\right).$$ Then $M=P+Q$. Since $A(1-A^{\pi}), DD^{\pi}\in L(X)^{qnil}$, we see that $P\in L(X)^{qnil}$, and so it has generalized Hirano inverse.
On the other hand, $D(I-D^{\pi})=D(DD^d)$. It follows by Lemma ??? that $D(I-D^{\pi})\in L(X)$ has generalized Hirano inverse. Thus
$Q$ has generalized Hirano inverse. It is easy
to verify that
$$PQ=\left(
\begin{array}{cc}
0&BD(I-D^{\pi})\\
DD^{\pi}C&0
\end{array}
\right)=\left(
\begin{array}{cc}
0&AA^{\pi}B\\
CA(I-A^{\pi})&0
\end{array}
\right)=QP.$$ Also we have $$P^d=\left(
\begin{array}{cc}
(A(I-A^{\pi}))^d&X\\
0&D^dD^{\pi}
\end{array}
\right)=\left(
\begin{array}{cc}
A^d&X\\
0&0
\end{array}
\right)$$ where $X=(A^d)^2\sum_{n=0}^{\infty}(A^d)^nB(DD^\pi)^n$.
Hence, $$P^dQ=\left(
\begin{array}{cc}
A^d&X\\
0&0
\end{array}
\right)\left(
\begin{array}{cc}
AA^{\pi}&0\\
C& D(I-D^\pi)
\end{array}
\right)=\left(
\begin{array}{cc}
XC&XD(I-D^{\pi})\\
0&0
\end{array}
\right)$$ where
$XC=(A^d)^2(B+\sum_{n=1}^{\infty}(A^d)^nB(DD^{\pi})^n)C=0$ as
$BC=0$, $B(DD^{\pi})^nC=0$. Moreover, we have
$$\begin{array}{lll}
XD(I-D^{\pi})&=&(A^d)^2(B+\sum_{n=1}^{\infty}(A^d)^nB(DD^{\pi})^n)D(I-D^{\pi})\\
&=&(A^d)^2(B+BD(I-D^{\pi}))\\
&=&(A^d)^2(B+A^{\pi}AB)\\
&=&(A^d)^2B
\end{array}$$ and so $P^dQ=\left(
\begin{array}{cc}
0&(A^d)^2B\\
0&0
\end{array}
\right)$. Thus, $I_2+P^dQ$ is invertible. So, it has g-Drazin
inverse. Further, we have
$$\begin{array}{lll}
(I_2+P^dQ)-(I_2+P^dQ)^3&=&-P^dQ(I_2+P^dQ)(2I_2+P^dQ)\\
&=&\left(
\begin{array}{cc}
0&-(A^d)^2B\\
0&0
\end{array}
\right)\left(
\begin{array}{cc}
2I&3(A^d)^2B\\
0&2I
\end{array}
\right)\\
&=&\left(
\begin{array}{cc}
0&-2(A^d)^2B\\
0&0
\end{array}
\right)\\
&=&\in (M_2(L(X))^{qnil}.
\end{array}$$ In light of Theorem ?????, $I_2+P^dQ\in M_2(L(X))$ has generalized Hirano inverse. Therefore,
we complete the proof by ?????.\end{proof}

In the proof of Theorem ???, we choose $$P=\left(
\begin{array}{cc}
A(1-A^{\pi})&B\\
0&D^2D^{d}
\end{array}
\right),~Q=\left(
\begin{array}{cc}
AA^{\pi}&0\\
C&DD^{\pi}
\end{array}
\right).$$ Analogously, we can derive

\begin{prop} Let $A,D\in L(X)$ have generalized Hirano inverse and $M$ be given by $(5.1)$. If $BC=CB=0$, $CA(I-A^{\pi})=(I-D^{\pi})DC$ and $A^{\pi}AB=BDD^{\pi}$, then $M\in M_2(L(X))$ has generalized Hirano inverse.
\end{prop}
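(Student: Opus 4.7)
The plan is to use the splitting $M=P+Q$ suggested just before the proposition, with
\[
P=\left(\begin{array}{cc} A(I-A^{\pi}) & B \\ 0 & D^{2}D^{d} \end{array}\right),
\qquad
Q=\left(\begin{array}{cc} AA^{\pi} & 0 \\ C & DD^{\pi} \end{array}\right).
\]
First I would check that $P$ has generalized Hirano inverse. Writing $A(I-A^{\pi})=(AA^{d})A$ and $D^{2}D^{d}=(DD^{d})D$ exhibits each diagonal block as the product of a commuting idempotent ($AA^{d}$ or $DD^{d}$) with an element that already has generalized Hirano inverse, so Lemma 5.2 gives that both diagonal blocks have generalized Hirano inverses, and Lemma \ref{lem5.1} then lifts this to the upper-triangular matrix $P$.

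Next I would show $Q\in M_{2}(L(X))^{qnil}$. Both $AA^{\pi}$ and $DD^{\pi}$ are quasinilpotent by definition of the spectral idempotent. Since $Q$ is block lower triangular, for any scalar $\lambda\neq 0$ the operator $\lambda I_{2}-Q$ is block lower triangular with invertible diagonal blocks $\lambda I-AA^{\pi}$ and $\lambda I-DD^{\pi}$, hence invertible. Thus $\sigma(Q)=\{0\}$ and $Q$ is quasinilpotent; in particular $Q$ has generalized Hirano inverse trivially.

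The central calculation is $PQ=QP$. The four diagonal entries of $PQ$ and $QP$ vanish thanks to $BC=CB=0$ together with the simple identities $A(I-A^{\pi})AA^{\pi}=0$ and $D^{2}D^{d}\cdot DD^{\pi}=0$. Equality of the top-right entries is exactly the hypothesis $A^{\pi}AB=BDD^{\pi}$, and equality of the bottom-left entries reduces, after rewriting $(I-D^{\pi})D=D\cdot DD^{d}=D^{2}D^{d}$, to the hypothesis $CA(I-A^{\pi})=(I-D^{\pi})DC$. The obstacle here is purely bookkeeping: making sure each hypothesis lines up with exactly the right entry of the comparison.

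With $PQ=QP$ secured, I would finish as follows. Proposition 2.9 applied to $P$ produces $e\in comm^{2}(P)$ with $e^{3}=e$ and $P-e\in M_{2}(L(X))^{qnil}$. Since $Q$ commutes with $P$, it also commutes with $e$, hence with $M=P+Q$, so $e\in comm(M)$. Moreover $M-e=(P-e)+Q$ is a sum of two quasinilpotents that commute with each other, so Lemma 2.1 gives $M-e\in M_{2}(L(X))^{qnil}$. Theorem 2.7 then yields that $M$ has generalized Hirano inverse.
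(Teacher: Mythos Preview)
Your argument is correct and is actually cleaner than the route the paper has in mind. The paper says to proceed ``analogously'' to Theorem 5.3, which means: verify that $P$ and $Q$ both have generalized Hirano inverse, check $PQ=QP$, compute $P^{d}$ explicitly as a block upper-triangular matrix, work out $P^{d}Q$, show that $(I_{2}+P^{d}Q)-(I_{2}+P^{d}Q)^{3}$ is quasinilpotent, and then invoke Corollary 4.4 to conclude that $M=P+Q$ has generalized Hirano inverse.

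What you noticed---and what the paper's template does not exploit---is that in this particular splitting both diagonal blocks of $Q$ are $AA^{\pi}$ and $DD^{\pi}$, so $Q$ itself is quasinilpotent. This lets you bypass the computation of $P^{d}$ and the analysis of $I_{2}+P^{d}Q$ entirely: once $PQ=QP$, pull a tripotent $e\in comm^{2}(P)$ from Proposition 2.9, observe $e$ commutes with $Q$ and hence with $M$, and finish via Theorem 2.7 since $M-e=(P-e)+Q$ is a sum of commuting quasinilpotents. Your approach is shorter and avoids the block-matrix bookkeeping with $P^{d}$; the paper's approach has the advantage of being a uniform template (it works even when $Q$ is not quasinilpotent, as in Theorem 5.3). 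Your verification of $PQ=QP$ and your use of Lemmas 5.1 and 5.2 for $P$ are both fine.
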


We now turn to the operator matrix $M$ with trivial generalized Schur complement, i.e., $D=CA^dB$ (see ???? ). We have

\begin{thm} Let $A\in L(X)$ has generalized Hirano inverse, $D\in L(X)$ and $M$ be given by $(5.1)$. Let $W=AA^d+A^dBCA^d$. If $AW$ has generalized Hirano inverse,
$$A^{\pi}BC=BCA^{\pi}=AA^{\pi}B=0, D=CA^dB,$$ then $M$ has generalized Hirano inverse.\end{thm}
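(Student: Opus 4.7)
The plan is to split
$$M = P + Q, \qquad P = \begin{pmatrix} A & B \\ CA^dA & CA^dB \end{pmatrix},\qquad Q = \begin{pmatrix} 0 & 0 \\ CA^{\pi} & 0 \end{pmatrix},$$
show $P$ and $Q$ are both Hirano polar with $PQ = 0$, and then apply Corollary~4.6 to conclude. Since $Q^2 = 0$, $Q$ is trivially Hirano polar, so the work concentrates on $P$ and on the compatibility $PQ=0$.

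For $P$, I would factor $P = UV$ with $U = \bigl(\begin{smallmatrix} I & 0 \\ CA^d & 0 \end{smallmatrix}\bigr)$ and $V = \bigl(\begin{smallmatrix} A & B \\ 0 & 0\end{smallmatrix}\bigr)$ inside $M_2(L(X))$. A direct product gives $VU = \operatorname{diag}(A + BCA^d,\,0)$, and by the particular case of Cline's formula for generalized Hirano inverses recorded after Corollary~3.6, $P$ is Hirano polar if and only if $A + BCA^d \in L(X)$ is. The key algebraic step is to use $A^{\pi}BC = 0$, equivalently $AA^dBC = BC$, to simplify $AW = A^2A^d + AA^dBCA^d$ to $A^2A^d + BCA^d$; combined with $A = A^2A^d + AA^{\pi}$ this yields
$$A + BCA^d \;=\; AW + AA^{\pi}.$$
A short computation, exploiting $A^{\pi}A^d = 0$ on one side and $A^{\pi}BC = 0$ on the other, shows $(AW)(AA^{\pi}) = (AA^{\pi})(AW) = 0$. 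Since $AW$ is Hirano polar by hypothesis and $AA^{\pi} \in L(X)^{qnil}$ is trivially so, Corollary~4.6 gives that $AW + AA^{\pi}$, and hence $P$, has a generalized Hirano inverse.

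To verify $PQ = 0$, the only potentially nonzero entries of $PQ$ are $BCA^{\pi}$ in position $(1,1)$ and $CA^dB \cdot CA^{\pi} = CA^d(BCA^{\pi})$ in position $(2,1)$, both of which vanish by $BCA^{\pi} = 0$. Hence $P$ and $Q$ are Hirano polar with $PQ = 0$, and a second application of Corollary~4.6 produces the generalized Hirano inverse of $M = P + Q$.

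The delicate step is the identity $A + BCA^d = AW + AA^{\pi}$ together with the vanishing of the two mixed products; this is where the hypothesis on $W$ gets linked to the spectral data of $A$ via the conditions $A^{\pi}BC = 0$ and the g-Drazin identity $AA^d\cdot A^{\pi} = 0$. The remaining pieces — the factorization $P = UV$, Cline's formula, $Q^2 = 0$, and the entry-wise check of $PQ = 0$ — are routine bookkeeping.
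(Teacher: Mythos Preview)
Your argument is correct. It follows the same broad strategy as the paper (split off a nilpotent block, reduce the remaining block via Cline's formula, and finish with Corollary~4.6), but the concrete decompositions differ. The paper puts the nilpotent part in the upper--right corner, $Q=\left(\begin{smallmatrix}0&A^{\pi}B\\0&0\end{smallmatrix}\right)$, checks $QP=0$, then splits the complementary block a second time as $P=P_1+P_2$ with $P_2P_1=0$, and only afterwards factors $P_1$ as a column times a row to land on $AW$ directly. You instead place the nilpotent piece in the lower--left corner, $Q=\left(\begin{smallmatrix}0&0\\ CA^{\pi}&0\end{smallmatrix}\right)$, factor your $P$ immediately as $UV$ with $VU=\mathrm{diag}(A+BCA^d,0)$, and push the quasinilpotent residual $AA^{\pi}$ down to the scalar level via $A+BCA^d=AW+AA^{\pi}$.

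Your route is a little more economical: one matrix--level splitting instead of two. A pleasant side effect is that the hypothesis $AA^{\pi}B=0$ is never invoked in your proof (the check $PQ=0$ uses only $BCA^{\pi}=0$; the identity $AW=A^2A^d+BCA^d$ uses only $A^{\pi}BC=0$; and for Corollary~4.6 the single vanishing $(AW)(AA^{\pi})=0$, which follows from $A^dA^{\pi}=0$ alone, already suffices). So your argument in fact yields a mild strengthening of the theorem. The paper's decomposition, on the other hand, isolates $AW$ exactly as the Cline partner of $P_1$ without the extra scalar splitting step; which presentation one prefers is largely a matter of taste.
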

\begin{proof} We easily see that $$M=\left(
\begin{array}{cc}
A&B\\
C&CA^dB
\end{array}
\right)=P+Q,$$ where $$P=\left(
\begin{array}{cc}
A&AA^dB\\
C&CA^dB
\end{array}
\right),Q=\left(
\begin{array}{cc}
0&A^{\pi}B\\
0&0
\end{array}
\right).$$ By hypothesis, we verify that $QP=0$. Clearly, $Q$ is nilpotent, and so it has
generalized Hirano inverse. Furthermore, we have
$$P=P_1+P_2,~P_1=\left(
\begin{array}{cc}
A^2A^d&AA^dB\\
CAA^d&CA^dB
\end{array}
\right),~P_2=\left(
\begin{array}{cc}
AA^{\pi}&0\\
CA^{\pi}&0
\end{array}
\right)$$ and $P_2P_1=0.$ By virtue of ???????, $P_2$
has generalized Hirano inverse. Obviously, we have $$P_1= \left(
\begin{array}{c}
AA^d\\
CA^d
\end{array}
\right)\left(
\begin{array}{cc}
A&AA^dB
\end{array}
\right).$$ By hypothesis, we see that $$\left(
\begin{array}{cc}
A&AA^dB
\end{array}
\right)\left(
\begin{array}{c}
AA^d\\
CA^d
\end{array}
\right)=AW$$ has generalized Hirano inverse. In light of ??????, $P_1$ has generalized Hirano inverse. Thus, $P$ generalized Hirano inverse, by Lemma ????.
According to Lemma ?????, $M$ has generalized Hirano inverse.\end{proof}

\vskip10mm

\end{document}